\documentclass[11pt,a4paper,reqno]{amsart}
\usepackage[utf8]{inputenc}
\usepackage{amsmath,amsfonts,amscd,amssymb,latexsym,mathtools}
\usepackage[left=1.21in,right=1.2in,top=1in,bottom=1in]{geometry}

\usepackage{mathrsfs}
\usepackage[percent]{overpic} 

\usepackage[nobysame]{amsrefs}
\usepackage{enumitem}
\usepackage{comment}
\usepackage{soul}
\usepackage{tikz}
\usepackage{marginnote}
\usepackage{bbm}
\usepackage{cancel,slashed}
\usepackage[normalem]{ulem}

\usepackage{footmisc}
\usepackage{hyperref}
\hypersetup{
    colorlinks=true,
    linkcolor=blue,
    citecolor=green,}

\numberwithin{equation}{section}

\newtheorem{theorem}{Theorem}[section]
\newtheorem{lemma}[theorem]{Lemma}
\newtheorem{proposition}[theorem]{Proposition}
\newtheorem{corollary}[theorem]{Corollary}

\theoremstyle{definition}
\newtheorem{definition}[theorem]{Definition}
\newtheorem{convention}[theorem]{Convention}
\newtheorem{example}[theorem]{Example}

\theoremstyle{remark}
\newtheorem{remark}[theorem]{Remark}

\newcommand{\cc}{\mathbbm{c}}
\newcommand{\CC}{\mathbb{C}}
\newcommand{\DD}{\mathbb{D}}

\newcommand{\GG}{\mathbb{G}}

\newcommand{\RR}{\mathbb{R}}
\renewcommand{\SS}{\mathbb{S}}
\newcommand{\ZZ}{\mathbb{Z}}

\newcommand{\calD}{\mathcal{D}}

\newcommand{\calG}{{\mathcal{G}}}

\newcommand{\calL}{\mathcal{L}}

\newcommand{\calU}{\mathcal{U}}

\newcommand{\rmc}{\mathrm{c}}

\newcommand{\rmS}{\mathrm{S}}

\newcommand{\ch}{\operatorname{ch}}

\newcommand{\coker}{\operatorname{coker}}
\newcommand{\diam}{\operatorname{diam}}
\newcommand{\dist}{\operatorname{dist}}
\newcommand{\dom}{\operatorname{dom}}
\newcommand{\End}{\operatorname{End}}
\newcommand{\grad}{\operatorname{grad}}

\newcommand{\id}{\operatorname{id}}

\newcommand{\ind}{\operatorname{ind}}
\newcommand{\Lip}{\operatorname{Lip}}
\newcommand{\Rad}{\operatorname{Rad}}

\newcommand{\rk}{\operatorname{rk}}

\newcommand{\Ric}{\operatorname{Ric}}

\newcommand{\spec}{\operatorname{spec}}

\newcommand{\Vol}{\operatorname{Vol}}

\newcommand{\upper}{\uppercase\expandafter}

\newcommand{\SigM}{{\Sigma M}}
\newcommand{\SigS}{{\Sigma\rmS}}

\newcommand{\tilcalG}{{\tilde\calG}}

\newcommand{\n}{\nabla}
\newcommand{\p}{\partial}
\newcommand{\pM}{{\p M}}

\newcommand{\hatD}{\hat D}

\newcommand{\scal}{{\rm scal}}

\renewcommand{\oe}{\bar{e}}
\newcommand{\oG}{\bar{G}}

\newcommand{\onu}{\bar{\nu}}

\begin{document}

\normalsize

\title[Dirac eigenvalue and the hyper-hemispherical radius]{Dirac eigenvalues and the hyper-hemispherical radius}

\author[Pengshuai Shi]{Pengshuai Shi}
\address{School of Mathematics and Statistics, Beijing Institute of Technology, Beijing 100081, People's Republic of China}
%\address{Beijing Key Laboratory on Mathematical Characterization, Analysis, and Applications of Complex Information, Beijing Institute of Technology, Beijing 102488, P. R. China}

\email{shipengshuai@bit.edu.cn, pengshuai.shi@gmail.com}

%\subjclass[2020]{Primary 58J28; Secondary 58J20, 53C27, 58J30, 58J32, 58J35}
%\subjclass[2010]{Primary 58J28; Secondary 53C25, 53C27, 58J20, 58J30}

\keywords{Dirac eigenvalue, manifold with boundary, hyper-hemispherical radius, spinor bundle, rigidity theorem}

\thanks{The author was partially supported by the NSFC (grant no. 12101042), the Fundamental Research Funds for the Central Universities and Beijing Institute of Technology Research Fund Program for Young Scholars.}

\begin{abstract}
On a connected compact Riemannian spin manifold with boundary, we prove an upper estimate for the smallest eigenvalue of the Dirac operator under the boundary condition associated with a chirality operator. This upper bound is expressed in terms of the hyper-hemispherical radius. Moreover, the upper bound is realized if and only if the manifold is a hemisphere. As an application, we get a scalar curvature rigidity result for the hemisphere in the case of Lipschitz maps whose differential has a prescribed trace-norm bound. Our estimate extends B\"ar's result for closed manifolds, where the upper bound is given in terms of hyperspherical radius.
\end{abstract}

\maketitle

%\setcounter{tocdepth}{1}
%\tableofcontents

%%%%%%%%%%%%%%%%%%%%%%
%%%%%%%%%%%%%%%%%%%%%%
\section{Introduction}\label{S:intro}

Dirac operator is an important object in mathematics and physics. Its spectrum encodes rich geometric information of the underlying space. Notable early results in this regard include the Weyl law and Lichnerowicz theorem \cite{Lich63}. On a connected closed Riemannian spin $n$-manifold $M$, the Lichnerowicz formula gives a lower bound for the eigenvalues of the Dirac operator $D$ in terms of the scalar curvature:
\[
\lambda(D)^2\ge\frac{1}{4}\inf_M\scal.
\]
This was later improved by Friedrich \cite{Friedrich80} to the following sharp estimate
\[
\lambda(D)^2\ge\frac{n}{4(n-1)}\inf_M\scal.
\]
From then on, numerous Dirac eigenvalue (lower or upper) estimates have been established. We refer interested readers to \cite{Ginoux09book}.

In a recent paper \cite{Baer26hyp-rad}, B\"ar proves a new upper estimate for the smallest Dirac eigenvalue (in absolute value)
\[
\lambda_1(D)^2\le\frac{n^2}{4\Rad_{\rmS^n}(M)^2}.
\]
Here $\Rad_{\rmS^n}(M)$ denotes the \emph{hyperspherical radius} of $M$. It is defined to be the supremum of all $R>0$ such that there exists a Lipschitz map $f$ from $M$ to the round unit $n$-sphere $\rmS^n$ with Lipschitz constant $\Lip(f)\le1/R$ and of nonzero degree. This estimate is rigid with the equality characterized by a round sphere of radius $\Rad_{\rmS^n}(M)$.

The notion of hyperspherical radius took shape in the early work of Gromov and Lawson \cite{GromovLawson80spin} on the study of positive scalar curvature using Dirac operators. It roughly represents the largest possible radius of a sphere such that the manifold can wrap around this sphere in a nontrivial, distance-nonincreasing way. There are several interesting implications of B\"ar's estimate. In particular, when combined with Friedrich's estimate, it yields Llarull's rigidity theorem \cite{Llarull98} for Lipschitz distance-nonincreasing maps.

In this paper, we investigate the Dirac eigenvalue estimation on manifolds with boundary. While lower estimates were studied extensively (some of them will be mentioned below), upper estimates are sparse in this case. Our main result is an extension of B\"ar's estimate to manifolds with boundary. To this end, we first need a new version of hyperspherical radius adapted to our setting. This is done by changing the mapping target from spheres to hemispheres, and the resulting quantity is called \emph{hyper-hemispherical radius}, and denoted by $\Rad_{\rmS_+^n}(M)$ (cf. Definition \ref{D:HHS rad}).

Secondly, in order to formulate a spectral problem for the Dirac operator on manifolds with boundary, a suitable boundary condition must be given. Typically, one can consider global (APS type) or local (chirality, MIT bag) boundary conditions. We choose to work on the chirality boundary condition (cf. Section \ref{S:chi bc}) because it is easy to handle and cooperates nicely with the Killing spinors on the hemisphere. In fact, under a chirality boundary condition, Hijazi--Montiel--Rold\'an \cite{HMR02} (see also \cite{HMZ01boundary}) obtained a rigid Friedrich-type Dirac eigenvalue lower estimate (cf. Theorem \ref{T:Dirac eig lower}). Also, the chirality boundary condition is conformally invariant and thus closely related to the Yamabe problem.

With these preparations, we have the following estimate.

\begin{theorem}\label{T:Dirac eig HHS}
Let $M$ be a connected compact Riemannian spin $n$-manifold ($n\ge2$) with nonempty boundary, and $D$ be the spin Dirac operator on $M$. Then under a chirality boundary condition, the smallest eigenvalue of $D$ (in absolute value) satisfies
\begin{equation}\label{E:Dirac eig HHS}
\lambda_1(D)^2\le\frac{n^2}{4\Rad_{\rmS_+^n}(M)^2}.
\end{equation}
Equality holds if and only if $M$ is isometric to the round $n$-hemisphere $\rmS_+^n(R)$ with radius $R=\Rad_{\rmS_+^n}(M)$.
\end{theorem}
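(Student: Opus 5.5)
We follow B\"ar's strategy for closed manifolds and adapt it to the boundary setting. Fix $R<\Rad_{\rmS_+^n}(M)$ and a Lipschitz map $f\colon M\to\rmS_+^n$ as in Definition \ref{D:HHS rad}. We may take $f$ to send $\pM$ into $\p\rmS_+^n$, to have nonzero degree relative to the boundary, and to satisfy $\Lip(f)\le 1/R$. The goal is to show $\lambda_1(D)^2\le n^2/(4R^2)$ and then let $R\uparrow\Rad_{\rmS_+^n}(M)$.

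\textbf{Test sections.} On $\rmS_+^n$ take the Killing spinors $\psi$ with $\n_X\psi=\pm\tfrac12 X\cdot\psi$; these span a trivial bundle $E\to\rmS_+^n$ of rank $2^{\lfloor n/2\rfloor}$. On the equator, a suitable chirality operator (Clifford multiplication by the unit normal composed with a fixed chirality) preserves a subspace of Killing spinors, so the chirality boundary condition on the hemisphere is compatible with this family. Pull $E$ back by $f$ and form $\SigM\otimes f^*\SigS$. The Killing spinors give a canonical parallel-like structure on $f^*E$, and pairing via the Clifford contraction produces a finite-dimensional space $V$ of sections $\sigma$ of $\SigM$ (after contracting with the flat trivialization). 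For such $\sigma$ we will derive the pointwise estimate
\[
|D\sigma|\le \frac{n}{2}\,|df|_{\mathrm{tr}}\,|\sigma|\le \frac{n}{2R}|\sigma|,
\]
where $|df|_{\mathrm{tr}}$ denotes the trace norm bounded as in the abstract. This is B\"ar's key computation, and it is local, so it transfers verbatim to the interior. The estimate holds almost everywhere, by Rademacher's theorem, when $f$ is only Lipschitz.

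\textbf{Nonvanishing and boundary condition.} We need some $\sigma\in V$ that is nonzero in $L^2$ and satisfies the chirality boundary condition. In the closed case this comes from an index or degree argument: the twisted Dirac operator on $\SigM\otimes f^*\SigS$ has index $\deg f\cdot\chi(\rmS^n)\neq0$. Here we instead use the relative degree together with the fact that the boundary condition on $\SigS$ along the equator pulls back consistently. The relevant index is that of the twisted operator with chirality boundary conditions, which equals $\deg f$ times the index on $\rmS_+^n$. We show the latter equals $1$ (its Euler characteristic), using the doubling or relative index theorem. This step is the main obstacle. First one has to choose the chirality operators on $M$ and on $\rmS_+^n$ compatibly. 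Then one has to verify that the index formula with chirality boundary condition yields a nonzero number, so that $V$ contains a nontrivial element obeying the boundary condition. The fallback is to double $M$ along $\pM$, double $f$ to a map into $\rmS^n$ of the same degree, and use B\"ar's closed-manifold result on chirality-symmetric spinors.

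\textbf{Conclusion and rigidity.} The min–max (Rayleigh quotient) characterization for the self-adjoint $D$ under the chirality condition gives $\lambda_1^2\le\|D\sigma\|^2/\|\sigma\|^2\le n^2/(4R^2)$. Letting $R\to\Rad_{\rmS_+^n}(M)$ gives \eqref{E:Dirac eig HHS}. For equality, a compactness argument produces a $1/\Rad$-Lipschitz limit map, and equality is attained throughout. That forces $\sigma$ to be an eigenspinor realizing both B\"ar's pointwise bound and Friedrich-type equality; combined with Theorem \ref{T:Dirac eig lower}, this yields a Killing spinor satisfying the chirality condition. The rigidity statement of Hijazi--Montiel--Rold\'an then identifies $M$ with $\rmS_+^n(R)$. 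Conversely, on $\rmS_+^n(R)$ the Killing spinors realize equality.
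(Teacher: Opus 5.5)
\textbf{Existence of the twisted harmonic spinor.} There is a genuine gap here, and you flag it yourself. The inequality rests on producing a nonzero element of the kernel of the twisted operator on $\SigM\otimes f^*\SigS_+^n$ that obeys a chirality-type boundary condition, and your proposal does not supply one. Your heuristic is also wrong in odd dimensions. In the closed case $\deg(f)\chi(\rmS^n)=0$ for odd $n$, which is exactly why B\"ar needs spectral flow or a product with $S^1$. So ``index $=\deg f$ times the index on $\rmS_+^n$'' has no ready justification there.

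The paper closes this step with the involution $\tilcalG=\rmc(\nu)G\otimes f^*(\bar{\rmc}(\onu)\oG)$. For even $n$ this is Lott's boundary condition, and his formula gives $\ind D^+_{f,\tilcalG}=(-1)^{n/2}\deg(f)\chi(\rmS_+^n)$. For odd $n$ one works on $\SS M\oplus\SS M$ with $G_0$, reduces to $\DD_{f,\pm\GG}$, and uses the B\"ar--Brendle--Hanke--Wang formula $\ind\DD_{f,\GG}=-\frac12\ind D^+_{\p f}$, where $\ind D^+_{\p f}=\pm2\deg(\p f)\neq0$.

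Lemma \ref{L:Killing spinor} is then needed to pass to components. It gives a Killing frame half of which satisfies the positive condition and half the negative one along the equator, because $\bar{\calG}$ is parallel for the Killing connection there. Hence each component $\varphi_\alpha$ satisfies the $\pm\calG$ condition, and Remark \ref{R:chi bc} shows both conditions give the same $\lambda^2$.

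Your doubling fallback is not a proof either. The double of $M$ carries only a Lipschitz metric unless $\pM$ is totally geodesic, so B\"ar's theorem does not apply as stated. You would also have to lift the reflection to spinors and show that the spectrum of the double splits into those of $D_{\calG}$ and $D_{-\calG}$.

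A minor point on the pointwise bound: it reads $\sum_\alpha|D\varphi_\alpha|^2\le\frac14|df|_1^2\sum_\alpha|\varphi_\alpha|^2$ with $|df|_1\le n\Lip(f)$. It holds for the component tuple of a kernel element of $D_f$, not for individual sections of a ``space $V$''. Your factor $\frac n2|df|_{\rm tr}$ is off by $n$.

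\textbf{Rigidity.} Your rigidity argument does not work. Theorem \ref{T:Dirac eig lower} requires $H_\pM\ge0$, and its equality case is equality with $\frac{n}{4(n-1)}\inf_M\scal$. Equality in \eqref{E:Dirac eig HHS} gives no information on $\scal$ or on the mean curvature of $\pM$, and the theorem assumes neither. So you get neither a Friedrich-type equality nor a Killing spinor.

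The paper follows B\"ar instead:
\begin{enumerate}
\item In the equality case each $\varphi_\alpha$ is a smooth $n^2/4$-eigenspinor of $D^2$.
\item The zero locus has codimension $\ge2$, by B\"ar's nodal theorem applied to $\Phi=(\varphi,\frac2nD\varphi)$.
\item $df$ is a linear isometry a.e., and $\tau_{v,w}\varphi=\varphi$.
\item Lemma \ref{L:vol form action} makes the orientation sign of $df$ constant a.e.
\item The rigidity theorem of \cite{KMS19arma} then shows $f$ is a Riemannian covering, hence an isometry.
\end{enumerate}

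For the ``if'' direction, Killing spinors only show that $n/(2R)$ is an eigenvalue. That it is the smallest needs Theorem \ref{T:Dirac eig lower}, which applies because the equator is totally geodesic and $\scal=n(n-1)/R^2$.
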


Notice that we do not need any assumptions on the second fundamental form or the mean curvature of $\pM$ in this result.

\begin{remark}\label{R:Dirac eig HHS}
As discussed in Section \ref{SS:chi op}, a chirality always exists on an even-dimensional spin manifold (and up to sign is equivalent to the standard $\ZZ_2$-grading of the spinor bundle). But on odd-dimensional manifolds, one has to take two copies of the spinor bundle to get a chirality. In this case, the Dirac operator is also understood as two copies of the individual Dirac operator (but appearing in the off-diagonal blocks). Although a chirality is needed to define the boundary condition, the squared Dirac eigenvalues actually do not depend on the chirality. See Remark \ref{R:chi bc}.
\end{remark}

\begin{remark}\label{R:Dirac eig HHS-2}
In \cite{Raulot09conformal}, Raulot defined a spin conformal invariant using the smallest Dirac eigenvalue under a chirality boundary condition, and proved that this invariant attains its maximum in the case of hemispheres. However, no rigidity results can be obtained.
\end{remark}

We point out that estimate \eqref{E:Dirac eig HHS} can be strengthened as
\begin{equation}\label{E:Dirac eig HHS-tr}
\lambda_1(D)^2\le\frac{n^2}{4\Rad_{\rmS_+^n}^{\rm tr}(M)^2},
\end{equation}
where $\Rad_{\rmS_+^n}^{\rm tr}(M)$ is a trace-refined hyper-hemispherical radius, cf. Section \ref{SS:ref HHS}.
However, equality in \eqref{E:Dirac eig HHS-tr} allows non-round spaces as shown in Proposition \ref{P:non-hemisp eq}, thus the rigidity part no longer holds. Nevertheless, when combined with the estimate of Hijazi--Montiel--Rold\'an, one does get a scalar curvature rigidity theorem for the hemisphere.

\begin{corollary}\label{C:hemisp rigidity}
Let $M$ be a connected compact Riemannian spin $n$-manifold ($n\ge2$) with nonempty boundary. Suppose the scalar curvature satisfies $\scal\ge n(n-1)$ and the normalized mean curvature $H_\pM$ is nonnegative (with respect to the inner unit normal vector).
Let $f:M\to\rmS_+^n$ be a Lipschitz map of nonzero degree such that $f(\pM)\subset\p\rmS_+^n$. If the trace norm $|df|_1\le n$ almost everywhere, then $f$ is a Riemannian isometry.
\end{corollary}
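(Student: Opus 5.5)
The plan is to combine the two eigenvalue estimates, Hijazi--Montiel--Rold\'an from below and the trace-refined estimate \eqref{E:Dirac eig HHS-tr} from above, and then extract rigidity from the equality case. Since $\scal\ge n(n-1)$ and $H_\pM\ge0$, Theorem \ref{T:Dirac eig lower} gives, under a chirality boundary condition,
\[
\lambda_1(D)^2\ge\frac{n}{4(n-1)}\inf_M\scal\ge\frac{n^2}{4}.
\]
The map $f$ is Lipschitz, has nonzero degree, sends $\pM$ into $\p\rmS_+^n$, and satisfies $|df|_1\le n$ a.e. It is therefore admissible in the definition of $\Rad^{\rm tr}_{\rmS_+^n}(M)$ (Section \ref{SS:ref HHS}) with $R=1$, so $\Rad^{\rm tr}_{\rmS_+^n}(M)\ge1$. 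Then \eqref{E:Dirac eig HHS-tr} gives $\lambda_1(D)^2\le n^2/4$. Hence all inequalities are equalities: $\lambda_1(D)^2=n^2/4$, $\scal\equiv n(n-1)$ wherever the Friedrich-type bound is attained, and the upper estimate is attained by $f$.

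Next I would use the equality case of Theorem \ref{T:Dirac eig lower}. Equality there forces the eigenspinor $\psi$ for $\lambda_1=\pm n/2$ to be a (real) Killing spinor with Killing number $\mp\frac12$. In the HMR rigidity this also gives that $\pM$ is minimal and that $M$ is Einstein with $\Ric=(n-1)g$. Running the same Killing spinor argument through the proof of \eqref{E:Dirac eig HHS-tr} gives $|\psi|$ constant.

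It remains to show that $f$ is a Riemannian isometry, using that the upper-bound proof is sharp. That proof presumably tests the Rayleigh quotient of $D^2$ against twisted spinors built from the pullback under $f$ of Killing spinors on $\rmS_+^n$, contracted against $\psi$, or a similar Clifford-multiplication construction. Equality in the estimate then forces every pointwise inequality in it to be an equality a.e. The key one is $|\mathrm{tr}(\sigma\circ df)|\le|df|_1\le n$ for sign-type endomorphisms $\sigma$. Equality here with $|df|_1=n$ means the singular values of $df$ all have a common sign structure and sum to $n$. Combined with the degree and volume comparison (Einstein with $\Ric=n-1$ and minimal boundary, so Bishop--Gromov type $\Vol(M)\le\Vol(\rmS_+^n)$ via the doubling), this should force $df$ to be orthogonal a.e.

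The main obstacle is this last step: upgrading the trace-norm equality to $df^T df=\id$. Trace-norm equality alone allows non-round examples (Proposition \ref{P:non-hemisp eq}), so one must really use the curvature rigidity. I would first try to show that $f^*$ of the Killing spinors equals a parallel-transported multiple of $\psi$. This makes $f$ a local isometry on the open set where it is differentiable. Since $f$ is Lipschitz with $|df|\le1$ in operator norm a.e. (because $\sum s_i=n$ and $\prod$-volume is preserved by degree), it is distance non-increasing. A Lipschitz a.e. local isometry between these spaces of nonzero degree, with $\Vol(M)=\Vol(\rmS_+^n)$, is then a Riemannian isometry by a standard metric-geometry argument.
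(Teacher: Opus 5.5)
Your first step matches the paper's: $f$ is admissible for $\Rad^{\rm tr}_{\rmS_+^n}(M)$ with $R=1$, and sandwiching $\lambda_1(D)^2$ between Theorem~\ref{T:Dirac eig lower} and \eqref{E:Dirac eig HHS-tr} forces $\lambda_1(D)^2=n^2/4$ and $\inf_M\scal=n(n-1)$. After that, however, you underuse Theorem~\ref{T:Dirac eig lower}. Its equality clause says outright that $M$ is isometric to the round hemisphere of radius $n/(2|\lambda_1|)=1$, so in particular $\Vol(M)=\Vol(\rmS_+^n)$. Extracting only ``Einstein with minimal boundary'' and then invoking Bishop--Gromov on a double is an unnecessary detour. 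With only $H_\pM=0$ in hand it is also not justified, since the doubled metric is not regular enough across $\pM$ for a Ricci comparison.

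The genuine gap is the step you yourself flag as the main obstacle. You propose to show that the pulled-back Killing spinors are parallel-transported multiples of $\psi$, so that $f$ is a local isometry ``on the open set where it is differentiable.'' This is not derived from anything and rests on guesses about how the upper-bound proof works. Moreover, the differentiability set of a Lipschitz map is not open in general.

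No spinor analysis is needed at this point. Let $\mu_1,\dots,\mu_n$ be the singular values of $df$. AM--GM gives $\mu_1\cdots\mu_n\le(|df|_1/n)^n\le1$ a.e. Assuming without loss of generality that $\deg(f)>0$,
\[
\Vol(M)\ge\int_M\mu_1\cdots\mu_n\,dV_M\ge\int_Mf^*dV_{\rmS_+^n}=\deg(f)\Vol(\rmS_+^n)\ge\Vol(\rmS_+^n)=\Vol(M).
\]
Hence all of these are equalities: $\mu_1=\dots=\mu_n=1$ a.e., $df$ is orientation-preserving a.e., and $\deg(f)=1$.

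The orientation conclusion is essential and is absent from your sketch. Knowing $df\in O(n)$ a.e.\ does not make a Lipschitz map an isometry (think of folding maps). Knowing $df\in SO(n)$ a.e.\ does, by the rigidity theorem \cite[Theorem 1]{KMS19arma} that the paper applies.

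Your parenthetical ``$\sum s_i=n$ and $\prod$-volume is preserved by degree'' contains the germ of this argument. As placed, though, it presupposes a volume equality you had not yet secured. Once $M\cong\rmS_+^n$ is read off from Theorem~\ref{T:Dirac eig lower}, this elementary volume--degree chain is the entire remaining proof.
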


\begin{remark}\label{R:hemisp rigidity}
When the map $f:M\to\rmS_+^n$ is assumed to be 1-Lipschitz or area-nonincreasing, such scalar curvature rigidity theorems have been proved previously in different generalities (cf. \cite{Lott21pams,CHS26lip,LeeTam26ajm,WangXie25tams,CHSS25} etc.). As a comparison, our result puts the assumption on the trace norm of $df$. The area-nonincreasing and the trace norm assumptions are mutually incomparable in general, but they both include the case of 1-Lipschitz maps.
\end{remark}

When $n\ge3$, the estimate of Hijazi--Montiel--Rold\'an was improved by Raulot \cite{Raulot06} in terms of the first eigenvalue of the conformal Laplacian. The estimate reads
\begin{equation}\label{E:Dirac eig con Lap}
\lambda(D)^2\ge\frac{n}{4(n-1)}\mu_1(L),
\end{equation}
where $\mu_1(L)$ is the first eigenvalue of the eigenvalue problem
\[
\left\{
\begin{aligned}
& Lu:=-\frac{4(n-1)}{n-2}\Delta u+\scal\cdot u=\mu_1(L)u \qquad \mbox{on }M,\\
& Bu:=-\frac{2}{n-2}\frac{\p u}{\p\nu}+H_\pM\cdot u=0 \qquad  \mbox{along }\pM.
\end{aligned}
\right.
\]
Here $\nu$ denotes the inward unit normal vector field along $\pM$. Moreover, the equality is realized if and only if $M$ is isometric to a hemisphere. Combining \eqref{E:Dirac eig con Lap} and \eqref{E:Dirac eig HHS-tr} yields an improvement of Corollary \ref{C:hemisp rigidity} as follows with essentially the same proof.

\begin{corollary}\label{C:hemisp rigidity-conf}
Let $M$ be a connected compact Riemannian spin $n$-manifold ($n\ge3$) with nonempty boundary. Suppose $\mu_1(L)\ge n(n-1)$. Let $f:M\to\rmS_+^n$ be a Lipschitz map of nonzero degree such that $f(\pM)\subset\p\rmS_+^n$. If the trace norm $|df|_1\le n$ almost everywhere, then $f$ is a Riemannian isometry.
\end{corollary}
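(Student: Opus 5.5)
Corollary \ref{C:hemisp rigidity-conf} can be derived exactly as Corollary \ref{C:hemisp rigidity}, with the Hijazi--Montiel--Rold\'an bound replaced by Raulot's estimate \eqref{E:Dirac eig con Lap}.

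\textbf{Step 1: upper bound.} The map $f$ has nonzero degree, satisfies $f(\pM)\subset\p\rmS_+^n$, and has $|df|_1\le n$ almost everywhere. By the definition of the trace-refined hyper-hemispherical radius in Section \ref{SS:ref HHS}, this gives $\Rad_{\rmS_+^n}^{\rm tr}(M)\ge 1$. The strengthened estimate \eqref{E:Dirac eig HHS-tr} then yields
\[
\lambda_1(D)^2\le\frac{n^2}{4}
\]
for the smallest eigenvalue under a chirality boundary condition.

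\textbf{Step 2: lower bound and equality.} Since $n\ge3$, Raulot's estimate \eqref{E:Dirac eig con Lap} applies, together with the hypothesis $\mu_1(L)\ge n(n-1)$:
\[
\lambda_1(D)^2\ge\frac{n}{4(n-1)}\,n(n-1)=\frac{n^2}{4}.
\]
So equality holds in \eqref{E:Dirac eig con Lap}. By Raulot's rigidity statement, $M$ is isometric to a hemisphere. Equality also forces $\mu_1(L)=n(n-1)$, and the first conformal eigenvalue of $\rmS_+^n(R)$ is $n(n-1)/R^2$, so the radius is $R=1$. Hence $M\cong\rmS_+^n$.

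\textbf{Step 3: $f$ is an isometry.} We now have a map $f:\rmS_+^n\to\rmS_+^n$ of nonzero degree with $|df|_1\le n$ almost everywhere and boundary mapped into the boundary. To conclude, I would reuse the final step of the proof of Corollary \ref{C:hemisp rigidity}, which faces the same situation. That step should go as follows.

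\begin{itemize}
\item Feed the eigenspinor realizing $\lambda_1=n/2$ back into the equality case of the proof of \eqref{E:Dirac eig HHS-tr}. This forces pointwise equality in the twisted estimate.
\item Pointwise equality should give $|df|_1=n$ almost everywhere, together with the singular values of $df$ all being equal to one. This pins down $df$ as an orthogonal map almost everywhere.
\item Hence $f$ is a local isometry almost everywhere. Being Lipschitz, $f$ is then metric-preserving, so it is a smooth Riemannian isometry by Myers--Steenrod.
\end{itemize}

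\textbf{Main obstacle.} Step 3 is the hard part. One must extract pointwise information on the singular values of $df$ from the equality case of \eqref{E:Dirac eig HHS-tr}, and only the trace norm is controlled, not the Lipschitz constant. The concrete route I would try first uses Killing spinors. On the round hemisphere, equality should force the twisted section to be parallel in the appropriate sense. Comparing the Clifford action of the pullback through $df$ with that of the identity should then show that $df$ has all singular values equal to one wherever it is differentiable.
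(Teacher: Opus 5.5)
Your Steps 1 and 2 are correct and agree with the paper. The paper reruns the proof of Corollary \ref{C:hemisp rigidity}, with Raulot's estimate \eqref{E:Dirac eig con Lap} in place of Theorem \ref{T:Dirac eig lower}.

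\textbf{The gap is Step 3, in two places.}

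First, the equality case of the twisted estimate cannot by itself pin down the singular values $\mu_1,\dots,\mu_n$ of $df$. Pointwise equality in \eqref{E:D squ est}, together with $\lambda_1(D)^2=n^2/4$, only gives two things: $|df|_1=n$ off the zero locus, and the relations $\tau_{e_j,e_k}\varphi=\varphi$ in a singular frame. In the proof of Theorem \ref{T:Dirac eig HHS}, one gets $\mu_1=\dots=\mu_n=1$ only because $\Lip(f)=1$ caps each $\mu_i$ by $1$. Under a trace-norm bound only the sum is controlled. Proposition \ref{P:non-hemisp eq} is exactly such an example: the Gauss map of a non-round convex symmetric hypersurface with metric $H^2g_0$ satisfies all these equality conditions, yet its singular values $\kappa_i/H$ are not all $1$. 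So the roundness of $M$ from Step 2 must enter essentially, and your Killing-spinor sketch does not say how.

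Second, even with $df\in O(n)$ a.e., a Lipschitz map need not be distance-preserving, so Myers--Steenrod is not yet available. The fold $(x_1,\dots,x_{n+1})\mapsto(|x_1|,x_2,\dots,x_{n+1})$ of $\rmS^n_+$ is boundary-preserving and has orthogonal differential a.e., but it is not an isometry. You need orientation control, and your sketch never uses $\deg(f)\neq0$ to obtain it.

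\textbf{The missing idea} is the elementary volume argument the paper uses. Once $M\cong\rmS^n_+$, it needs no spinors at all. By AM--GM, $\mu_1\cdots\mu_n\le(|df|_1/n)^n\le1$ a.e. Assuming $\deg(f)>0$ without loss of generality,
\[
\Vol(M)\ge\int_M\mu_1\cdots\mu_n\,dV_M\ge\int_Mf^*dV_{\rmS^n_+}=\deg(f)\Vol(\rmS^n_+)\ge\Vol(\rmS^n_+)=\Vol(M).
\]
Equality throughout forces $\mu_1=\dots=\mu_n=1$ and $\det df>0$ almost everywhere, together with $\deg(f)=1$. That is, $df\in SO(n)$ a.e. Then \cite[Theorem 1]{KMS19arma} shows that $f$ is a smooth Riemannian covering. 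Since $M$ is connected and $\rmS^n_+$ is simply connected, $f$ is an isometry.
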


The paper is organized as follows. In Section \ref{S:HHS rad}, we define hyper-hemispherical radius and compute it for the example of spherical caps. In Section \ref{S:chi bc}, we discuss the chirality boundary condition, which is a local boundary condition associated with a chirality operator. In particular, we need to take the parity of the dimension into consideration. In Section \ref{S:ind Dirac}, we study the Dirac operator on a spinor bundle with a Lipschitz twist and find a nontrivial harmonic twisted spinor by index computations. This spinor is intimately connected to the Dirac eigenvalue estimate, and is used to prove Theorem \ref{T:Dirac eig HHS} in Section \ref{S:pf thm cor}. We then turn to the trace-refined hyper-hemispherical radius, discuss the equality case of \eqref{E:Dirac eig HHS-tr} and prove Corollary \ref{C:hemisp rigidity}. Finally in Section \ref{S:ref est}, we compute lower bounds of the trace-refined hyper-hemispherical radius for geodesic annuli and warped-product spaces to get explicit upper Dirac eigenvalue estimates.

%%%%%%%%%%%%%%%%%%%%%%
\subsection{Notational convention}\label{SS:nota con}

To avoid confusions, we unify the notations that are used throughout.

\begin{convention}\label{Con:spinor bundle}
Let $M$ be a connected spin manifold of dimension $n$. We denote by $\SS M$ the (complex) spinor bundle of rank $2^{[\frac{n}{2}]}$ over $M$. The Clifford multiplication and spin connection on $\SS M$ are denoted by $\cc(\cdot)$ and $\n^{\SS M}$, respectively (cf. \cite[Chapter II]{LawMic89}).

We will mostly work on the \emph{graded} spinor bundle over $M$, denoted by $\SigM$. It is of rank $2^{[\frac{n+1}{2}]}$ and is defined to be

\begin{itemize}
\item $\SigM:=\SS M$ when $n$ is even;
\item $\SigM:=\SS M\oplus\SS M$ when $n$ is odd.
\end{itemize}

In both dimension parities, the Clifford multiplication and spin connection on $\SigM$ are denoted by $\rmc(\cdot)$ and $\n^\SigM$, respectively. (See \eqref{E:spinor bundle-odd} for the odd-dimensional setting.)

When $M=\rmS^n$ or $\rmS_+^n$, we will use $\bar{\cc}(\cdot)$ (or $\bar{\rmc}(\cdot)$) to denote the Clifford multiplications on the corresponding spinor bundles.
\end{convention}

We point out that the reason to consider graded spinor bundles is to make it possible to talk about chiralities. See Section \ref{SS:chi op} for details.

%%%%%%%%%%%%%%%%%%%%%%
%%%%%%%%%%%%%%%%%%%%%%
\section{Hyper-hemispherical radius}\label{S:HHS rad}

In this section, we extend the definition of hyperspherical radius to manifolds with boundary. In this situation, the target space is replaced by a unit hemisphere.

\begin{definition}\label{D:HHS rad}
Let $M$ be a connected compact orientable Riemannian $n$-manifold ($n\ge2$) with nonempty boundary. The \emph{hyper-hemispherical radius} of $M$, denoted by $\Rad_{\rmS_+^n}(M)$, is defined to be the supremum of all $R>0$ such that there exists a boundary-preserving Lipschitz map $f:M\to\rmS_+^n$ with Lipschitz constant $\Lip(f)\le1/R$ and $\deg(f)\ne0$. Here $\rmS_+^n$ denotes the round unit (upper) $n$-hemisphere, and boundary-preserving means $f(\pM)\subset\p\rmS_+^n$.
\end{definition}

\begin{remark}\label{R:bdry pres}
The boundary-preserving condition is essential for the well-definedness of the degree. In particular, denoting $\p f:=f|_{\pM}:\pM\to\p\rmS_+^n$, then $\deg(f)=\deg(\p f)$.
\end{remark}

\begin{remark}\label{R:HHS rad}
The properties of hyperspherical radius discussed in \cite[Section 2]{Baer26hyp-rad} still hold for hyper-hemispherical radius. We record some of them below:
\begin{enumerate}
\item The supremum in Definition \ref{D:HHS rad} is a maximum. That is, there exists a boundary-preserving Lipschitz map $f:M\to\rmS_+^n$ with nonzero degree such that $\Lip(f)=1/\Rad_{\rmS_+^n}(M)$; \label{R:HHS rad-1}

\item It is equivalent to consider only \emph{smooth} maps $f$ in Definition \ref{D:HHS rad}, in which case the supremum may not be attained; \label{R:HHS rad-2}

\item The diameter of $M$ gives an upper bound for the hyper-hemispherical radius
\[
\Rad_{\rmS_+^n}(M)\le\frac{\diam(M)}{\pi};
\] \label{R:HHS rad-3}

\item It holds that
\[
\frac{|\deg(f)|}{\Lip(f)^n}\le\frac{\Vol(M)}{\Vol(\rmS_+^n)},
\]
which implies $\Rad_{\rmS_+^n}(M)^n\le\Vol(M)/\Vol(\rmS_+^n)$, with equality if and only if $M$ is isometric to the round hemisphere of radius $\Rad_{\rmS_+^n}(M)$; \label{R:HHS rad-4}

\item For any constant $\lambda>0$, we have $\Rad_{\rmS_+^n}(M,\lambda^2g)=\lambda\cdot\Rad_{\rmS_+^n}(M,g)$. \label{R:HHS rad-5}
\end{enumerate}
\end{remark}

\begin{example}\label{Eg:sph cap}
Consider the spherical cap
\[
M=\{x\in\rmS^n:\dist(x,p_0)\le\rho\}
\]
lying in the unit $n$-sphere (with the induced metric), where $p_0$ is the north pole and $0<\rho<\pi$.

In the case that $0<\rho\le\pi/2$, by Remark \ref{R:HHS rad}.\ref{R:HHS rad-3},
\begin{equation}\label{E:sph cap-1}
\Rad_{\rmS_+^n}(M)\le\frac{2\rho}{\pi}.
\end{equation}
In the case that $\pi/2<\rho<\pi$, note that $\pM$ is an $(n-1)$-sphere of radius $\sin\rho$, denoted by $\rmS^{n-1}(\sin\rho)$. For a boundary-preserving Lipschitz map $f:M\to\rmS_+^n$ of nonzero degree, the boundary map $\p f:\rmS^{n-1}(\sin\rho)\to\rmS^{n-1}$ is again of nonzero degree with the Lipschitz constant $\Lip(\p f)\le\Lip(f)$. Applying Remark \ref{R:HHS rad}.\ref{R:HHS rad-4} to $\p f$ yields $\Lip(\p f)\ge1/\sin\rho$. Hence $\Lip(f)\ge1/\sin\rho$. This means
\begin{equation}\label{E:sph cap-2}
\Rad_{\rmS_+^n}(M)\le\sin\rho.
\end{equation}

On the other hand, expressing each point on the $n$-sphere in geodesic polar coordinates $(\theta,\omega)$ with $0\le\theta\le\pi$ and $\omega\in\rmS^{n-1}$ (so the Cartesian coordinates become $(\omega\sin\theta,\cos\theta)$), we define a map $f:M\to\rmS_+^n$ by
\begin{equation}\label{E:sph cap map}
f(\theta,\omega):=\Big(\frac{\pi}{2\rho}\theta,\omega\Big).
\end{equation}
It is clear that $f$ is boundary-preserving. Also, $f$ is a homeomorphism so with degree $\pm1$. For a tangent vector $u=\p/\p\theta+v$ on $M$ (where $v$ is a tangent vector on $\rmS^{n-1}$), we have
\[
df(u)=\frac{\pi}{2\rho}\frac{\p}{\p\theta}+v.
\]
Using the standard metric $g=d\theta^2+\sin^2\theta g_{\rmS^{n-1}}$ on both $M$ and $\rmS_+^n$, we get
\[
\frac{|df(u)|_{\rmS_+^n}}{|u|_M}\le\max\bigg\{\frac{\pi}{2\rho},\sup_{0<\theta\le\rho}\frac{\sin(\frac{\pi}{2\rho}\theta)}{\sin\theta}\bigg\}.
\]
By a straightforward computation,
\[
\sup_{0<\theta\le\rho}\frac{\sin(\frac{\pi}{2\rho}\theta)}{\sin\theta}=
\begin{cases}
\frac{\pi}{2\rho}, & 0<\rho\le\frac{\pi}{2}, \\
\frac{1}{\sin\rho}, & \frac{\pi}{2}<\rho<\pi.
\end{cases}
\]
It follows that
\[
\Lip(f)\le
\begin{cases}
\frac{\pi}{2\rho}, & 0<\rho\le\frac{\pi}{2}, \\
\frac{1}{\sin\rho}, & \frac{\pi}{2}<\rho<\pi.
\end{cases}
\]
This implies the reverse inequality in \eqref{E:sph cap-1} or \eqref{E:sph cap-2}. In summary, we obtain
\begin{equation}\label{E:sph cap rad}
\Rad_{\rmS_+^n}(M)=\begin{cases}
\frac{2\rho}{\pi}, & 0<\rho\le\frac{\pi}{2}, \\
\sin\rho, & \frac{\pi}{2}<\rho<\pi.
\end{cases}
\end{equation}
In particular, $\Rad_{\rmS_+^n}(\rmS_+^n)=1$.

By Remark \ref{R:HHS rad}.\ref{R:HHS rad-5}, we also get the hyper-hemispherical radius for spherical caps lying in a round $n$-sphere of any radius. In particular,
\begin{equation}\label{E:HHS R-hemisp}
\Rad_{\rmS_+^n}(\rmS_+^n(R))=R.
\end{equation}
\end{example}

\begin{remark}\label{R:HHS rad example}
This example shows that the hyper-hemispherical radius of a spherical cap is determined by different reasons in the two regimes. For $0<\rho\le\pi/2$, it is determined by the bulk geometry; while for $\pi/2<\rho<\pi$, it is determined by the boundary geometry.
\end{remark}

\begin{remark}\label{R:HHS rad example-1}
Let $M$ be a geodesic ball of radius $\rho$ in the Euclidean or hyperbolic space. By a similar computation, one finds that $\Rad_{\rmS_+^n}(M)=2\rho/\pi$.
\end{remark}

%%%%%%%%%%%%%%%%%%%%%%
%%%%%%%%%%%%%%%%%%%%%%
\section{The chirality boundary condition on spin manifolds}\label{S:chi bc}

%%%%%%%%%%%%%%%%%%%%%%
\subsection{The chirality operator}\label{SS:chi op}

Let $M$ be a complete orientable Riemannian spin $n$-manifold and $\SigM\to M$ be the graded spinor bundle as in Convention \ref{Con:spinor bundle}. An endomorphism field $G$ of $\SigM$ is called a \emph{chirality operator} if it is involutive, unitary, parallel and anticommutes with the Clifford multiplication (cf. \cite[Section 1.5]{Ginoux09book}). A chirality operator $G$ determines a $\ZZ_2$-graded decomposition of the bundle into $\pm1$-eigenspaces of it.

For even-dimensional $M$, a chirality operator is naturally given by the Clifford multiplication of the complex volume form $\omega_\CC$, which under a positively-oriented local orthonormal tangent frame $(e_i)_{i=1,\dots,n}$, takes the form
\[
\omega_\CC:=(\sqrt{-1})^{[\frac{n+1}{2}]}e_1\wedge\dots\wedge e_n.
\]
And the corresponding grading is the standard $\ZZ_2$-grading on $\SS M$. In fact, the complex spinor representation is the irreducible module for the complex Clifford algebra $\CC l_n$ when $n$ is even, so $\pm\rmc(\omega_\CC)$ are the only possible chirality operators by Schur's lemma.

For odd-dimensional $M$, the complex volume form is central and acts as identity on $\SS M$. By a similar representation theory reasoning, there does not exist any chirality operator on $\SS M$ alone. But now we are on $\SigM$, which consists of two copies of $\SS M$. Chirality operators can be constructed in this situation. Basically we just define the spin connection and Clifford multiplication by
\begin{equation}\label{E:spinor bundle-odd}
\n^{\SigM}:=\left(
\begin{matrix}
\n^{\SS M} & 0 \\
0 & \n^{\SS M}
\end{matrix}
\right),\qquad
\rmc(\cdot):=\left(
\begin{matrix}
0 & \cc(\cdot) \\
\cc(\cdot) & 0
\end{matrix}
\right).
\end{equation}
Then
\begin{equation}\label{E:chi op-odd}
G_\theta=\left(
\begin{matrix}
\cos\theta & -\sqrt{-1}\sin\theta \\
\sqrt{-1}\sin\theta & -\cos\theta
\end{matrix}
\right),\qquad\theta\in[0,2\pi]
\end{equation}
is a family of chirality operators on $\SigM$.

\begin{remark}\label{R:chi op-odd}
The above $G_\theta$ are actually all the possible chirality operators on $\SigM$. To see this, write
\[
G=\left(
\begin{matrix}
T_1 & T_2 \\
T_3 & T_4
\end{matrix}
\right),\qquad T_i\in\End(\SS M).
\]
Since $G$ anticommutes with $\rmc(\cdot)$, we get
\[
\begin{aligned}
& T_1\cc(\cdot)+\cc(\cdot)T_4=T_4\cc(\cdot)+\cc(\cdot)T_1=0, \\
& T_2\cc(\cdot)+\cc(\cdot)T_3=T_3\cc(\cdot)+\cc(\cdot)T_2=0.
\end{aligned}
\]
It then follows that for $i\in\{1,2,3,4\}$,
\[
T_i\cc(X)\cc(Y)=\cc(X)\cc(Y)T_i,\qquad\mbox{for all }X,Y\in TM.
\]
That is, $T_i$ commutes with the action of the even Clifford algebra $\CC l_n^0\cong\CC l_{n-1}$. Note that $n-1$ is even, again by Schur's lemma, $T_i=t_i\id_{\SS M}$ for scalar functions $t_i:M\to\CC$. Since $G$ is parallel, each $t_i$ is constant on the connected $M$. Moreover, $t_1+t_4=t_2+t_3=0$. The expression \eqref{E:chi op-odd} now follows from the fact that $G$ is a unitary involution.

Furthermore, for $\theta\in[0,2\pi]$, it holds that $G_\theta=\calU_\theta G_0\calU_\theta^{-1}$ with
\[
\calU_\theta:=\left(
\begin{matrix}
\cos\frac{\theta}{2} & \sqrt{-1}\sin\frac{\theta}{2} \\
\sqrt{-1}\sin\frac{\theta}{2} & \cos\frac{\theta}{2}
\end{matrix}
\right).
\]
Also $\calU_\theta$ commutes with both $\nabla^{\SigM}$ and $\rmc(\cdot)$.
\end{remark}

\begin{remark}\label{R:chi op}
In the literature, it was mentioned that if $M$ is (or is a codimension zero submanifold of) a spacelike hypersurface of a Lorentzian manifold, then Clifford multiplication by a unit timelike normal vector field serves as a chirality operator. It is worth pointing out that this chirality should be regarded as on the restriction of the ambient spinor bundle instead of on the intrinsic spinor bundle $\SS M$, as the former is actually two copies of the latter in the odd-dimensional situation. Precisely, the chirality operator given by such a Clifford multiplication is $G_0$ or $G_\pi$ of \eqref{E:chi op-odd}.
\end{remark}

With a chirality operator chosen, $\SigM$ is $\ZZ_2$-graded
\[
\SigM=\Sigma^+M\oplus\Sigma^-M.
\]
The Dirac operator on $\SigM$ is defined as usual to be
\[
D:=\sum_{i=1}^n\rmc(e_i)\n_{e_i}^\SigM.
\]
It is formally self-adjoint, and with respect to the grading has the form
\[
D=\left(
\begin{matrix}
0 & D^- \\
D^+ & 0
\end{matrix}
\right),
\]
where for $n$ even, $D^\pm$ are the two chiral Dirac operators, and for $n$ odd, $D^\pm$ are both the formally self-adjoint Dirac operators on $\SS M$.

%%%%%%%%%%%%%%%%%%%%%%
\subsection{The chirality boundary condition}\label{SS:chi bc}

Consider $M$ to be a compact Riemannian spin $n$-manifold with nonempty boundary admitting a chirality operator $G$. Define an involution on $\SigM|_\pM$ by $\calG:=\rmc(\nu)G$, where $\nu$ is the inward unit normal vector field along $\pM$. Then one can impose a local boundary condition to the Dirac operator $D$, whose domain is given by
\begin{equation}\label{E:chi bc}
H^1_\calG(M,\SigM):=\{\varphi\in H^1(M,\SigM):\calG\varphi|_\pM=\varphi|_\pM\}.
\end{equation}
This is called the \emph{chirality boundary condition}. The resulting operator is denoted by $D_\calG$.
It can be shown that this is an elliptic boundary condition (cf. \cite{Bunke93comparison}) and $D_\calG$ is a self-adjoint operator. Hijazi--Montiel--Rold\'an proved the following eigenvalue estimate. (See also \cite{HMZ01boundary}.)

\begin{theorem}[{\cite[Theorem 3]{HMR02}}]\label{T:Dirac eig lower}
Suppose $\pM$ has nonnegative mean curvature (with respect to the inward normal). Then any (real) eigenvalue of $D$ under a chirality boundary condition satisfies a squared lower bound in terms of the scalar curvature of $M$:
\begin{equation}\label{E:eig lower}
\lambda^2\ge\frac{n}{4(n-1)}\inf_M\scal.
\end{equation}
Moreover, equality holds (for the smallest eigenvalues in absolute value) if and only if $M$ is isometric to the round $n$-hemisphere of radius $n/2|\lambda|$.
\end{theorem}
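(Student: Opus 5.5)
The plan is the classical Friedrich argument adapted to the boundary, via the twistor operator and an integrated Schrödinger--Lichnerowicz formula; the chirality condition is used to kill the boundary term. The rigidity part will be reduced to an Obata--Reilly type characterization of the hemisphere.

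\textbf{Integral formula.} Let $\varphi\in H^1_\calG(M,\SigM)$ satisfy $D\varphi=\lambda\varphi$; by elliptic regularity it is smooth up to $\pM$. I would integrate the Schrödinger--Lichnerowicz formula $D^2=\nabla^*\nabla+\frac{\scal}{4}$ over $M$. With $\nu$ inward, this gives a Reilly-type identity of the form
\[
\int_M\Big(|D\varphi|^2-|\n\varphi|^2-\frac{\scal}{4}|\varphi|^2\Big)=\int_\pM\Big(\langle D^\pM\varphi,\varphi\rangle-\frac{n-1}{2}H_\pM|\varphi|^2\Big),
\]
where $D^\pM$ is the intrinsic boundary Dirac operator, acting on $\SigM|_\pM$ through $\rmc(\nu)\rmc(e_j)$. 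The signs have to be fixed carefully.

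\textbf{Boundary term.} Since $G$ is parallel and anticommutes with Clifford multiplication, $G$ commutes with $\rmc(\nu)\rmc(e_j)$. Hence $D^\pM$ anticommutes with $\calG=\rmc(\nu)G$. Consequently $D^\pM$ maps the $+1$-eigenbundle of $\calG$ to the $-1$-eigenbundle, and these are orthogonal. So $\langle D^\pM\varphi,\varphi\rangle=0$ pointwise on $\pM$ for $\varphi$ satisfying the boundary condition.

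\textbf{Twistor decomposition.} Write $P_X\varphi=\n_X\varphi+\frac1n\rmc(X)D\varphi$, so that $|\n\varphi|^2=|P\varphi|^2+\frac1n|D\varphi|^2$. Inserting this and $H_\pM\ge0$ into the identity yields
\[
\frac{n-1}{n}\lambda^2\int_M|\varphi|^2\ge\int_M|P\varphi|^2+\frac14\inf_M\scal\int_M|\varphi|^2 .
\]
This gives \eqref{E:eig lower}.

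\textbf{Equality case.} Equality forces $P\varphi=0$, i.e.\ $\n_X\varphi=-\frac{\lambda}{n}\rmc(X)\varphi$, a real Killing spinor. It also forces $H_\pM|\varphi|^2=0$ on $\pM$. A Killing spinor has constant length and so no zeros; hence $H_\pM\equiv0$ and $M$ is Einstein with $\scal=\frac{4(n-1)}{n}\lambda^2$. The hard step is passing from this to the hemisphere. I would use the real function
\[
f:=\langle G\varphi,\varphi\rangle .
\]
Differentiating with the Killing equation, and using that $G$ anticommutes with $\rmc(X)$ while $\rmc(X)$ is skew-adjoint, gives $df(X)=\frac{2\lambda}{n}\langle G\rmc(X)\varphi,\varphi\rangle$ up to sign. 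Differentiating once more gives
\[
\n^2f=-\frac{4\lambda^2}{n^2}\,f\,g .
\]
On $\pM$ the condition $\calG\varphi=\varphi$ means $G\varphi=-\rmc(\nu)\varphi$. Thus $f=-\langle\rmc(\nu)\varphi,\varphi\rangle$, which vanishes because $\rmc(\nu)$ is skew-adjoint and $f$ is real; so $f|_\pM=0$. One must also check $f\not\equiv0$. If $f\equiv0$, then $df\equiv0$ would give $\langle G\rmc(X)\varphi,\varphi\rangle=0$ for all $X$. I expect this to contradict $\varphi\neq0$ after a short algebraic argument, possibly after replacing $\varphi$ by $\varphi+G\varphi$-type combinations. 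This nondegeneracy check is the main technical obstacle I anticipate.

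\textbf{Conclusion.} Granting $f\not\equiv0$, Reilly's boundary version of Obata's theorem applies to a nontrivial solution of $\n^2f=-c^2fg$ vanishing on $\pM$. It shows $M$ is isometric to the round hemisphere of radius $1/c=n/(2|\lambda|)$. Conversely, the Killing spinors on $\rmS_+^n(R)$ satisfying the chirality condition realize equality.
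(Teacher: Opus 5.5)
The paper does not prove this statement; it quotes it from \cite{HMR02}. Your overall route is the standard one: the integrated Schr\"odinger--Lichnerowicz formula, vanishing of the boundary Dirac term because $D^{\partial M}$ anticommutes with $\calG$, the twistor decomposition, then a real Killing spinor and Reilly's characterization of the hemisphere via $f=\langle G\varphi,\varphi\rangle$. Your computations of $\nabla^2 f$ and of $f|_{\partial M}=0$ are correct. There is, however, a sign slip and one genuine gap.

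\textbf{The sign.} Take $\nu$ inward and $H_{\partial M}>0$ for convex boundaries, and set $D^{\partial M}=\sum_j\rmc(\nu)\rmc(e_j)\nabla^{\partial M}_{e_j}$. Then the boundary integrand is $\langle D^{\partial M}\varphi,\varphi\rangle+\frac{n-1}{2}H_{\partial M}|\varphi|^2$, because $\sum_j\rmc(\nu)\rmc(e_j)\cdot\frac12\rmc(Ae_j)\rmc(\nu)=+\frac12\operatorname{tr}A$. With the minus sign you display, $H_{\partial M}\ge0$ would give the reverse of your next inequality. That next inequality is the correct consequence of the correct sign, so this is only a slip. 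Still, it is exactly the sign that makes $H_{\partial M}\ge 0$, rather than $\le 0$, the right hypothesis.

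\textbf{The gap: nondegeneracy of $f$.} You leave $f\not\equiv0$ open, and the way you propose to settle it cannot work. It does not follow from $\varphi\neq0$ by algebra alone. For $n$ odd and $G=G_0$, take $\varphi=(\psi,\psi)$ with $\psi$ a Killing spinor of $\SS M$, even on the round hemisphere itself. This is a Killing spinor of $\Sigma M$ with $\langle G\varphi,\varphi\rangle\equiv0$; it simply violates the boundary condition. Passing to combinations like $\varphi+G\varphi$ does not help either: $G\varphi$ is a Killing spinor with the \emph{opposite} Killing number and satisfies the \emph{negative} chirality condition, so such a combination is neither a Killing spinor nor admissible.

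The missing idea is to use the boundary condition a second time, now on $df$. Since $G\rmc(\nu)=-\rmc(\nu)G=-\calG$, along $\partial M$ one has $\langle G\rmc(\nu)\varphi,\varphi\rangle=-|\varphi|^2$. Hence
\[
df(\nu)=-\frac{2\lambda}{n}\langle G\rmc(\nu)\varphi,\varphi\rangle=\frac{2\lambda}{n}|\varphi|^2\neq 0 ,
\]
because a real Killing spinor has constant nonzero length. So $f\not\equiv0$; in fact $\nabla f=\frac{2\lambda}{n}|\varphi|^2\nu$ on $\partial M$, as for the height function of a hemisphere. Reilly's theorem then applies.

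This uses $\lambda\neq0$, which the statement tacitly assumes through the radius $n/2|\lambda|$. To also exclude equality with $\lambda=0$, the same identity works. In that case $\varphi$ is parallel, and $X\mapsto\langle G\rmc(X)\varphi,\varphi\rangle$ is a real parallel $1$-form of norm at most $|\varphi|^2$. Along $\partial M$ it takes the value $-|\varphi|^2$ on $\nu$, so its dual vector field equals $-|\varphi|^2\nu$ there. That field is divergence-free yet points strictly outward along all of $\partial M$, which contradicts the divergence theorem.
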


\begin{remark}\label{R:Dirac square eig}
Consider the self-adjoint operator $D^2$ with the domain
\[
\dom(D_\calG^2):=\{\varphi\in H^2(M,\SigM):\varphi\in H^1_\calG(M,\SigM),\,D\varphi\in H^1_\calG(M,\SigM)\}.
\]
Then $\spec(D_\calG^2)=\{\lambda^2:\lambda\in\spec(D_\calG)\}$. So the theorem is equivalent to a lower eigenvalue estimate for $D_\calG^2$.
\end{remark}

\begin{remark}\label{R:chi bc}
If $\calG$ is replaced by $-\calG$ in \eqref{E:chi bc}, the resulting boundary condition is called \emph{negative} chirality boundary condition. (In this regard, the usual chirality boundary condition is sometimes called positive chirality boundary condition.) Also, for $\varphi\in H^1_\calG(M,\SigM)$, one can see that $G\varphi\in H^1_{-\calG}(M,\SigM)$. Since $D$ anticommutes with $G$, the Dirac eigenvalues under positive and negative chirality boundary conditions are equal in absolute value. Thus we do not need to distinguish them in the square Dirac eigenvalue estimate.

In fact, one can further show that the (squared) Dirac eigenvalues are independent of the chirality operators. In even dimensions, we already see it from above as the only chirality operators are $\pm\rmc(\omega_\CC)$. In odd dimensions, by the discussion in Remark \ref{R:chi op-odd}, $\calU_\theta$ induces an isomorphism between $H_{\calG_0}^1(M,\SigM)$ and $H_{\calG_\theta}^1(M,\SigM)$. Since $\calU_\theta$ commutes with $D$, the eigenvalues are preserved.
\end{remark}

From this remark, we can make the following convention from now on.

\begin{convention}\label{Con:chi op}
When $n=\dim M$ is even, the chirality operator is chosen to be $G=\rmc(\omega_\CC)$.
When $n=\dim M$ is odd, the chirality operator is chosen to be $G=G_0$ in \eqref{E:chi op-odd}.
%(Note that this is equivalent to setting $\cc(\omega_\CC)=\id$ on the first copy of $\SS M$ and $\cc(\omega_\CC)=-\id$ on the second copy.)
\end{convention}

%%%%%%%%%%%%%%%%%%%%%%
\subsection{Killing spinors on the hemisphere}\label{SS:Kil spi hemisp}

It is well-known (see for example \cite[Appendix A]{Ginoux09book}) that there exists a family of smooth spinor fields $(\psi'_\alpha)_{\alpha=1,\dots,2^{[\frac{n}{2}]}}$ on $\rmS^n$ such that
\begin{equation}\label{E:Killing spinor'}
\n^{\SS\rmS^n}_\cdot\psi'_\alpha=-\frac{1}{2}\bar{\cc}(\cdot)\psi'_\alpha,
\end{equation}
where $\bar{\cc}(\cdot)$ denotes the Clifford multiplication on $\SS\rmS^n$. These so-called \emph{Killing spinors} form an orthonormal basis of the spinor bundle $\SS\rmS^n$ at each point of $\rmS^n$. Also, there is another family of smooth spinor fields $(\psi''_\alpha)_{\alpha=1,\dots,2^{[\frac{n}{2}]}}$ satisfying the above properties except that $-1/2$ is replaced by $1/2$ in \eqref{E:Killing spinor'}.

For the graded spinor bundle $\SigS^n_+$ over the hemisphere, we can choose such a frame adapted to the chirality boundary conditions.

\begin{lemma}\label{L:Killing spinor}
There exists a smooth orthonormal frame $(\psi_\alpha)_{\alpha=1,\dots,2^{[\frac{n+1}{2}]}}$ of the graded spinor bundle $\SigS^n_+$ which satisfies the Killing equations
\begin{equation}\label{E:Killing spinor}
\n^{\SigS^n_+}_\cdot\psi_\alpha=-\frac{1}{2}\bar{\rmc}(\cdot)\psi_\alpha.
\end{equation}
Moreover, we can require that the first half of these spinors satisfies the positive chirality boundary condition associated with the chirality operator $\oG$ and the second half satisfies the negative chirality boundary condition.
\end{lemma}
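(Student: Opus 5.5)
Our plan is to start from the Killing spinors $\psi'_\alpha$ on $\rmS^n$ recalled above, build the graded frame on $\SigS^n_+$, and then rotate it by a constant unitary matrix so that it splits according to the boundary involution $\bar\calG=\bar\rmc(\onu)\oG$. The key observation is that $\oG$ is parallel and anticommutes with Clifford multiplication. Hence if $\psi$ satisfies \eqref{E:Killing spinor}, then
\[
\n_X(\oG\psi)=\oG\n_X\psi=-\tfrac12\oG\bar\rmc(X)\psi=\tfrac12\bar\rmc(X)\oG\psi ,
\]
so $\oG$ exchanges the $-\tfrac12$ and $+\tfrac12$ Killing spinors. Consequently $\bar\rmc(\onu)\oG$ does not preserve Killing spinors in general. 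Instead we will examine $\bar\calG$ along $\p\rmS^n_+$ together with a parallel-type ambient structure.

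\textbf{Step 1: the frame.} For $n$ even, take $\psi_\alpha=\psi'_\alpha$. For $n$ odd, take $(\psi'_\alpha,0)$ and $(0,\psi'_\alpha)$ in $\SigS^n_+=\SS\oplus\SS$. By \eqref{E:spinor bundle-odd},
\[
\n(\varphi_1,\varphi_2)=-\tfrac12\bar\rmc(\cdot)(\varphi_1,\varphi_2)
\]
forces $\n\varphi_1=-\tfrac12\bar\cc\varphi_2$ and $\n\varphi_2=-\tfrac12\bar\cc\varphi_1$. So the correct choices are $(\psi'_\alpha,\psi'_\alpha)/\sqrt2$ and $(\psi''_\alpha,-\psi''_\alpha)/\sqrt2$. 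This gives a pointwise orthonormal frame of $2^{[\frac{n+1}{2}]}$ Killing spinors.

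\textbf{Step 2: an endomorphism of the Killing space.} Let $K$ be the space of solutions of \eqref{E:Killing spinor}. It has dimension $N=2^{[\frac{n+1}{2}]}$, and evaluation at any point is an isometry onto the fiber. Realize $\rmS^n\subset\RR^{n+1}$ and let $x_{n+1}$ be the height function, so $\p\rmS^n_+=\{x_{n+1}=0\}$ and $\onu=\grad x_{n+1}$ there. Define the endomorphism $A\psi:=\bar\rmc(\grad x_{n+1})\oG\psi+x_{n+1}\oG\psi$.

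We will check that $A$ maps $K$ to $K$. Differentiate using $\n\grad x_{n+1}=-x_{n+1}\id$ and the anticommutation rules, in the spirit of the standard computation for Killing spinors. The terms $dx_{n+1}(X)\oG\psi$ and $x_{n+1}\oG\n_X\psi$ should cancel the error terms. Along the boundary we have $x_{n+1}=0$, so $A=\bar\calG$ there. On the Killing space, $A$ is a pointwise self-adjoint involution, because $|\grad x_{n+1}|^2+x_{n+1}^2=1$ and $\bar\rmc(\grad x)\oG$ anticommutes with $\oG$. If the signs turn out wrong, we will replace $x_{n+1}\oG$ by $\sqrt{-1}\,x_{n+1}\oG$ or adjust by the sign of the Killing constant. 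Finding the correct expression of this form is the main obstacle, and we will fix it by a direct differentiation.

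\textbf{Step 3: diagonalize and check the dimensions.} Since $A$ is a self-adjoint involution of $K$, with $K$ identified isometrically with a fiber, we can choose an orthonormal basis of $K$ consisting of $\pm1$-eigenvectors of $A$. Each eigenspace has dimension $N/2$, because $\bar\calG$ has trace zero on the fiber: it anticommutes with $\oG$, since $\oG$ anticommutes with $\bar\rmc(\onu)$ and commutes with itself. Ordering the $+1$ eigenvectors first gives the required frame. On $\p\rmS^n_+$ these satisfy $\bar\calG\psi_\alpha=\pm\psi_\alpha$, which are exactly the positive and negative chirality boundary conditions.
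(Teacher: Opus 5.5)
Your proposal is correct, and it follows a genuinely different route from the paper. The paper does not build a Killing frame first. It fixes an orthonormal basis at a single boundary point, adapted to the $\pm1$-eigenspaces of $\bar\calG=\bar\rmc(\onu)\oG$. It extends this basis by parallel transport for the flat connection $\n+\frac12\bar\rmc(\cdot)$ on the simply connected $\rmS^n_+$. Finally it observes that $\bar\calG$ is parallel for this connection along the equator, since $\onu$ is parallel along the totally geodesic equator and $\bar\rmc(X)$ commutes with $\bar\calG$ for $X\perp\onu$. Hence the boundary splitting is preserved along the connected equator.

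You instead extend $\bar\calG$ to a global involution of the space $K$ of Killing spinors and then diagonalize it. The hedge in your Step 2 is unnecessary: with $h=x_{n+1}$ and $V=\grad h$, the operator $A=(\bar\rmc(V)+h)\oG$ works exactly as written. Using $\n_XV=-hX$, and the facts that $\oG$ is parallel and anticommutes with $\bar\rmc$, one gets
\[
\n_X(A\psi)=-\tfrac12h\,\bar\rmc(X)\oG\psi+\tfrac12\bar\rmc(V)\bar\rmc(X)\oG\psi+\langle V,X\rangle\oG\psi=-\tfrac12\bar\rmc(X)A\psi .
\]
The last equality uses $\bar\rmc(V)\bar\rmc(X)=-\bar\rmc(X)\bar\rmc(V)-2\langle V,X\rangle$. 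Moreover $A^*=A$ and $A^2=|V|^2+h^2=1$. Up to a unit constant, $A$ is ambient Clifford multiplication by $e_{n+1}$, which preserves restrictions of parallel spinors of $\RR^{n+1}$, i.e.\ the Killing spinors; compare $\chi$ in the proof of Proposition \ref{P:non-hemisp eq}.

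As for what each route buys: the paper's argument is shorter and purely local along the boundary, and needs no global formula. Yours gives an explicit canonical splitting $K=K_+\oplus K_-$ that identifies exactly which Killing spinors satisfy the positive or negative chirality condition. It also does not use connectedness of the equator.
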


\begin{proof}
Note that the involution $\bar{\calG}=\bar{\rmc}(\onu)\oG$ induces a splitting of $\SigS_+^n|_{\p\rmS_+^n}$ into two eigenspaces of the same rank. Choose an orthonormal basis at a boundary point with the first half of the elements lying in the $+1$-eigenspace and the second half lying in the $-1$-eigenspace. Then extend using the parallel transport of the Killing connection
\[
\n^{\SigS^n_+,\frac{1}{2}}_\cdot:=\n^{\Sigma\rmS^n_+}_\cdot+\frac{1}{2}\bar{\rmc}(\cdot)
\]
to get a global family of spinor fields $(\psi_\alpha)_\alpha$. The well-definedness is guaranteed by the fact that $\rmS^n_+$ is simply connected and by the curvature identity
\[
R^{\SigS^n_+,\frac{1}{2}}_{X,Y}=R^{\SigS^n_+}_{X,Y}+\frac{1}{4}[\bar{\rmc}(X),\bar{\rmc}(Y)]=0
\]
for any tangent vectors $X,Y$, where the latter equality is a well-known formula on the unit round sphere (with no differences for the graded spinor bundle).
Now it is clear that each $\psi_\alpha$ satisfies \eqref{E:Killing spinor}. The orthonormal property follows from
\[
\begin{aligned}
X\langle\psi_\alpha,\psi_\beta\rangle & =\langle\n^{\Sigma\rmS^n_+}_X\psi_\alpha,\psi_\beta\rangle+\langle\psi_\alpha,\n^{\Sigma\rmS^n_+}_X\psi_\beta\rangle \\
& =-\frac{1}{2}\langle\bar{\rmc}(X)\psi_\alpha,\psi_\beta\rangle-\frac{1}{2}\langle\psi_\alpha,\bar{\rmc}(X)\psi_\beta\rangle=0,
\end{aligned}
\]
where $\langle\cdot,\cdot\rangle$ denotes the pointwise Hermitian inner product. Finally, the boundary condition is preserved because $\bar{\calG}$ is $\n^{\SigS^n_+,\frac{1}{2}}$-parallel along the boundary (the equator).
\end{proof}

The same argument applies with $1/2$ replaced by $-1/2$ in \eqref{E:Killing spinor}, using the connection $\n^{\Sigma\rmS^n_+}_\cdot-\frac{1}{2}\bar{\rmc}(\cdot)$.

%%%%%%%%%%%%%%%%%%%%%%
%%%%%%%%%%%%%%%%%%%%%%
\section{Index computation for the twisted Dirac operator}\label{S:ind Dirac}

In this section, we work on the first step in proving Theorem \ref{T:Dirac eig HHS} --- finding a nontrivial element in the kernel of the twisted Dirac operator on $M$. In doing this, we need a suitable boundary condition and an index theoretic argument.

%%%%%%%%%%%%%%%%%%%%%%
\subsection{Spinor bundle with Lipschitz twists}\label{SS:Lip twi}

Let $M$ be as in Section \ref{SS:chi bc} and $f:M\to\rmS_+^n$ be a boundary-preserving Lipschitz map of nonzero degree. Form two twisted spinor bundles
\[
E:=\SigM\otimes f^*\SigS_+^n\quad\mbox{ and }\quad E_0:=\SigM\otimes\CC(M)^{2^{[\frac{n+1}{2}]}}
\]
over $M$, where $\CC(M)^{2^{[\frac{n+1}{2}]}}\to M$ denotes the trivial complex bundle of rank $2^{[\frac{n+1}{2}]}=\rk(f^*\SigS_+^n)$. The pull-back $f^*\SigS_+^n$ is a Lipschitz bundle. Nevertheless, the $H^1$-space $H^1(M,E)$ and $L^2$-space $L^2(M,E)$ can still be defined from completions of Lipschitz sections of $E$. We then have a twisted Dirac operator
\[
D_f:=\sum_{i=1}^n\rmc(e_i)\n^f_{e_i}:H^1(M,E)\to L^2(M,E),
\]
where $\rmc(\cdot)$ denotes the Clifford multiplication on $\SigM$, $\n^f:=\n^\SigM\otimes1+1\otimes f^*\n^{\SigS_+^n}$ is the tensor product connection on $E$, and $(e_i)_{i=1,\dots,n}$ is a local orthonormal frame of $TM$. Similarly, there is another twisted Dirac operator
\[
D_0:H^1(M,E_0)\to L^2(M,E_0).
\]
It is actually $2^{[\frac{n+1}{2}]}$ copies of $D:H^1(M,\SigM)\to L^2(M,\SigM)$. So we will denote both of them by $D$ henceforth.

We now recall the definition of a unitary map between $E$ and $E_0$ (cf. \cite[Section 4]{Baer26hyp-rad}). Using the family of spinor fields $(\psi_\alpha)_{\alpha=1,\dots,2^{[\frac{n+1}{2}]}}$ constructed in Lemma \ref{L:Killing spinor}, we get a Lipschitz continuous family of sections $\Psi_\alpha:=f^*\psi_\alpha$ that form an orthonormal basis of $f^*\SigS_+^n$ at every point of $M$. Each section $\varphi$ of $E$ can be expressed as
\begin{equation}\label{E:phi decomp}
\varphi=\sum_\alpha\varphi_\alpha\otimes\Psi_\alpha.
\end{equation}
This induces a unitary bundle isomorphism
\begin{equation*}
\begin{aligned}
U_f:E &\to E_0, \\
\varphi &\mapsto(\varphi_\alpha)_{\alpha=1,\dots,2^{[\frac{n+1}{2}]}}
\end{aligned}
\end{equation*}
of Lipschitz regularity. In this case, $\varphi\in H^1(M,E)$ if and only if every $\varphi_\alpha\in H^1(M,\Sigma M)$. The map $U_f$ builds a bridge between $D_f$ and $D$. In fact, the following lemma shows that they just differ by an $L^\infty$-potential.

\begin{lemma}[{\cite[Lemma 1]{Baer26hyp-rad}}]\label{L:D_f vs. D}
It holds that
\[
U_f\circ D_f\circ U_f^{-1}=D-\frac{1}{2}Z_f,
\]
where $Z_f$ is an $L^\infty$-endomorphism field on $E_0$ defined by
\[
Z_f(\varphi_\alpha)_\alpha=\Big(\sum_\beta\sum_{i=1}^n\langle df(e_i)\psi_\beta,\psi_\alpha\rangle\rmc(e_i)\varphi_\beta\Big)_\alpha.
\]
\end{lemma}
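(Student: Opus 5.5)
The identity is a pointwise computation that uses only the Killing equation of Lemma \ref{L:Killing spinor} and the Leibniz rule for the tensor product connection $\n^f$. I will work first with smooth $f$. For Lipschitz $f$ the same computation holds almost everywhere, by Rademacher's theorem together with the chain rule for $f^*\n^{\SigS_+^n}$ applied to pulled-back sections; the only $H^1$ fact needed is that $U_f$ preserves $H^1$, which was noted above.

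Write $\varphi=\sum_\beta\varphi_\beta\otimes\Psi_\beta$ as in \eqref{E:phi decomp}. For each tangent vector $X$, the Leibniz rule gives
\[
\n^f_X\varphi=\sum_\beta\n^\SigM_X\varphi_\beta\otimes\Psi_\beta+\sum_\beta\varphi_\beta\otimes(f^*\n^{\SigS_+^n})_X\Psi_\beta .
\]
Since $\Psi_\beta=f^*\psi_\beta$, we have $(f^*\n^{\SigS_+^n})_X\Psi_\beta=f^*\big(\n^{\SigS_+^n}_{df(X)}\psi_\beta\big)$. By \eqref{E:Killing spinor} this equals $-\frac12\,\bar{\rmc}(df(X))\psi_\beta$, evaluated at $f(x)$.

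Next I expand this term in the orthonormal frame:
\[
\bar{\rmc}(df(X))\psi_\beta=\sum_\alpha\langle\bar{\rmc}(df(X))\psi_\beta,\psi_\alpha\rangle\psi_\alpha .
\]
The shorthand $df(e_i)\psi_\beta$ in the statement of Lemma \ref{L:D_f vs. D} denotes exactly $\bar{\rmc}(df(e_i))\psi_\beta$.

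Now apply $\rmc(e_i)\otimes1$, sum over $i$, and read off the coefficient of $\Psi_\alpha$. This gives
\[
(U_fD_f\varphi)_\alpha=D\varphi_\alpha-\frac12\sum_\beta\sum_i\langle df(e_i)\psi_\beta,\psi_\alpha\rangle\,\rmc(e_i)\varphi_\beta ,
\]
which is the claimed formula.

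The step needing most care is the bookkeeping of the Lipschitz regularity. The $\Psi_\alpha$ are only Lipschitz, so their derivatives exist only almost everywhere, and one must justify the chain rule there. I would handle this by approximating $f$ by smooth maps $f_k$ with $df_k\to df$ almost everywhere and uniformly bounded, and then passing to the limit in the weak formulation.

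It remains to see that $Z_f\in L^\infty$. Each coefficient $\langle\bar{\rmc}(df(e_i))\psi_\beta,\psi_\alpha\rangle$ is bounded by $|df(e_i)|\le\Lip(f)$. The $\rmc(e_i)$ are unitary.
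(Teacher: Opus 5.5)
Your proposal is correct and follows the same route as the cited proof in \cite{Baer26hyp-rad}. The paper does not reproduce that proof, but the identical Leibniz-rule expansion followed by the Killing-equation substitution reappears in \eqref{E:Df and D} and in the proof of Lemma \ref{L:D squ equ}. One simplification: the smooth approximation by $f_k$ is unnecessary, and it would be awkward anyway since the bundles $E_{f_k}$ change with $k$. Because each $\psi_\beta$ is smooth, $\Psi_\beta=\psi_\beta\circ f$ obeys the ordinary chain rule at every point where $f$ is differentiable, and Lipschitz sections have weak derivatives equal to their a.e.\ derivatives, so your pointwise computation already holds almost everywhere.
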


%%%%%%%%%%%%%%%%%%%%%%
\subsection{Boundary conditions for $D_f$}\label{SS:D_f bc}

%We discuss the boundary condition for $D_f$ so as to get a well-posed index problem. 
Having the chirality operators $G$ on $M$ and $\oG$ on $\rmS_+^n$ as in Convention \ref{Con:chi op}, respectively, the corresponding spinor bundles are $\ZZ_2$-graded
\begin{equation}\label{E:SigM decom}
\SigM=\Sigma^+M\oplus\Sigma^-M,\qquad\SigS_+^n=\Sigma^+\rmS_+^n\oplus\Sigma^-\rmS_+^n.
\end{equation}
This induces a total $\ZZ_2$-grading on $E$ as
\begin{equation}\label{E:E decom}
\begin{aligned}
& E^+:=\big(\Sigma^+M\otimes f^*\Sigma^+\rmS_+^n\big)\oplus\big(\Sigma^-M\otimes f^*\Sigma^-\rmS_+^n\big), \\
& E^-:=\big(\Sigma^+M\otimes f^*\Sigma^-\rmS_+^n\big)\oplus\big(\Sigma^-M\otimes f^*\Sigma^+\rmS_+^n\big).
\end{aligned}
\end{equation}
With respect to this grading, the Dirac operator is odd graded
\[
D_f=\left(
\begin{matrix}
0 & D_f^- \\
D_f^+ & 0
\end{matrix}
\right),
\]
where $D_f^\pm$ are formal adjoints of each other.

Using $G$ and $\oG$ again, we define an involution on $E|_{\pM}$ by
\[
\tilcalG:=\rmc(\nu)G\otimes f^*(\bar{\rmc}(\onu)\oG).
\]
Similar to Section \ref{SS:chi bc}, we impose a chirality boundary condition for $D_f$ with domain $H_\tilcalG^1(M,E)$ defined as in \eqref{E:chi bc}. A priori, this is not a smooth operator. But like Lemma \ref{L:D_f vs. D}, one can compute that
\begin{equation}\label{E:bc conju}
U_f\circ\tilcalG\circ U_f^{-1}(\varphi_\alpha)_\alpha=\Big(\sum_\beta\langle\bar{\rmc}(\onu)\oG\psi_\beta,\psi_\alpha\rangle\rmc(\nu)G\varphi_\beta\Big)_\alpha,
\end{equation}
which is independent of $f$. Hence the theory of boundary value problems for Dirac-type operators with locally $L^\infty$-potentials due to Ballmann--Br\"uning--Carron \cite{BBC08} can be applied. In particular, $D_{f,\tilcalG}:H_\tilcalG^1(M,E)\to L^2(M,E)$ is self-adjoint and Fredholm.

\begin{remark}\label{R:D_f bc}
Note that $\tilcalG$ preserves $E^\pm$. When $n=\dim M$ is odd, it is more convenient to interpret the above setting in the ungraded language. By Conventions \ref{Con:spinor bundle} and \ref{Con:chi op}, $\Sigma^+M=\Sigma^-M=\SS M$, $\Sigma^+\rmS_+^n=\Sigma^-\rmS_+^n=\SS\rmS_+^n$, so that
\[
E^+=E^-=(\SS M\otimes f^*\SS\rmS_+^n)\oplus(\SS M\otimes f^*\SS\rmS_+^n).
\]
In this case
\[
D_f^+=\left(
\begin{matrix}
0 & \DD_f \\
\DD_f & 0
\end{matrix}
\right),
\]
where $\DD_f$ denotes the formally self-adjoint twisted Dirac operator acting on $\SS M\otimes f^*\SS\rmS_+^n$. Similarly for $D_f^-$.

For the boundary involution $\tilcalG$, with respect to the gradings \eqref{E:SigM decom},
\[
\rmc(\nu)G=\left(
\begin{matrix}
0 & -\cc(\nu) \\
\cc(\nu) & 0
\end{matrix}
\right),\qquad\bar{\rmc}(\onu)\oG=\left(\begin{matrix}
0 & -\bar{\cc}(\onu) \\
\bar{\cc}(\onu) & 0
\end{matrix}
\right).
\]
Hence
\[
\tilcalG|_{E^\pm}=\left(
\begin{matrix}
0 & \pm\cc(\nu)\otimes f^*\bar{\cc}(\onu) \\
\pm\cc(\nu)\otimes f^*\bar{\cc}(\onu) & 0
\end{matrix}
\right).
\]
%Thus with respect to the total grading \eqref{E:E decom},
%\[
%\tilcalG=\left(
%\begin{matrix}
%\left(
%\begin{matrix}
%0 & \cc(\nu)\otimes f^*\bar{\cc}(\onu) \\
%\cc(\nu)\otimes f^*\bar{\cc}(\onu) & 0
%\end{matrix}
%\right) & 0 \\
%0 & \left(
%\begin{matrix}
%0 & -\cc(\nu)\otimes f^*\bar{\cc}(\onu) \\
%-\cc(\nu)\otimes f^*\bar{\cc}(\onu) & 0
%\end{matrix}
%\right)
%\end{matrix}
%\right).
%\]
Set $\GG:=\cc(\nu)\otimes f^*\bar{\cc}(\onu)$ and
\[
H^1_{\GG}(M,\SS M\otimes f^*\SS\rmS_+^n):=\{\varphi\in H^1(M,\SS M\otimes f^*\SS\rmS_+^n):\GG\varphi|_\pM=\varphi|_\pM\}.
\]
Consider
\[
\DD_{f,\pm\GG}:H^1_{\pm\GG}(M,\SS M\otimes f^*\SS\rmS_+^n)\to L^2(M,\SS M\otimes f^*\SS\rmS_+^n).
\]
If there is a nonzero $\eta\in\ker\DD_{f,\pm\GG}$, put $\varphi=(\eta,\pm\eta)\in E^+$. Then $D_f^+\varphi=0$ and $\tilcalG\varphi=\varphi$. So $\varphi$ is a nonzero element of $\ker D_{f,\tilcalG}^+$.

\end{remark}

%%%%%%%%%%%%%%%%%%%%%%
\subsection{The index computation}\label{SS:ind D_f}

By computing the index of $D_{f,\tilcalG}^+$ or $\DD_{f,\GG}$, we aim to prove the following.

\begin{proposition}\label{P:ind D_f}
There exists a nonzero element in the kernel of $D_{f,\tilcalG}:H_\tilcalG^1(M,E)\to L^2(M,E)$.
\end{proposition}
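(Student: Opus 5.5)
We will show that the index of the chiral part $D^+_{f,\tilcalG}:H^1_\tilcalG(M,E^+)\to L^2(M,E^-)$ is nonzero; in odd dimensions we instead show that $\ind\DD_{f,\GG}\ne0$ and then use Remark~\ref{R:D_f bc}. Since $D_{f,\tilcalG}$ is self-adjoint and Fredholm and $\tilcalG$ preserves $E^\pm$, a nonzero index gives a nonzero element of $\ker D^+_{f,\tilcalG}\subset\ker D_{f,\tilcalG}$.

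\textbf{Step 1: reduction to smooth $f$.} Approximate $f$ uniformly, and in $W^{1,p}$, by smooth boundary-preserving maps $f_t$ of the same degree; the boundary is kept in the equator by composing with a projection near $\p\rmS_+^n$. By Lemma~\ref{L:D_f vs. D} and \eqref{E:bc conju}, after conjugating by $U_{f_t}$ every operator in the family has the form $D-\frac12 Z_{f_t}$ on the fixed domain $H^1_{\tilcalG_0}(M,E_0)$. Here the boundary condition $\tilcalG_0:=U_f\tilcalG U_f^{-1}$ does not depend on $f$, and the $L^\infty$ potentials are bounded perturbations. So $D-\frac{s}{2}Z_f$ is a continuous Fredholm path from $D$ to $U_fD_fU_f^{-1}$. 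Running the same argument with $f$ replaced by any boundary-preserving map homotopic to it (through such maps) shows that the index depends only on the boundary-preserving homotopy class of $f$, and hence only on $\deg f$.

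\textbf{Step 2: computing the index.} The cleanest route is doubling. The chirality condition is local and of the type treated in Bunke's comparison/gluing theory \cite{Bunke93comparison}. Glue $M$ to a second copy of itself with reversed orientation along $\pM$, and likewise $\rmS_+^n$ to $\rmS_-^n$, obtaining $\rmS^n$. The boundary-preserving $f$ then doubles to a map $\hat f:\hat M\to\rmS^n$ of degree $2\deg f$; the sign has to be checked against the orientation conventions. The chirality condition with $\tilcalG$ is exactly the transmission condition that makes the index additive. This should give
\[
\ind D^+_{f,\tilcalG}=\tfrac12\ind\hat D^+_{\hat f}=\tfrac12\deg(\hat f)\,\chi(\rmS^n)\ \text{(even $n$)},
\]
using Atiyah--Singer on the closed double, where the spinor bundle twisted by $\hat f^*\SigS^n$ contributes $\deg\hat f\cdot\chi(\rmS^n)$. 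This yields $\ind D^+_{f,\tilcalG}=\deg(f)\cdot\chi(\rmS^n)/\ldots\ne0$ up to a nonzero normalization, the constant being fixed by the model case $f=\id$ on $\rmS_+^n$ treated below. In odd dimensions the index of the self-adjoint $\DD_{f,\GG}$ need not vanish, because $\GG$ is not a self-adjoint boundary condition of symplectic type. Equivalently, $\DD_{f,\GG}$ on $M$ corresponds to the even-dimensional problem on $\hat M$ for the doubled twisted operator, and the same argument applies.

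\textbf{Step 3: normalization via the model case.} Check the model case $M=\rmS_+^n$, $f=\id$ directly. The sections $\sum_\alpha\psi'_\alpha\otimes\psi_\alpha$, built from Killing spinors of opposite Killing constants, give explicit harmonic twisted spinors satisfying the boundary condition, using Lemma~\ref{L:Killing spinor}. A vanishing argument via the twisted Lichnerowicz formula, where the twisting curvature is positive enough, shows that $\ker D^-$ is trivial. Together these give index $\pm1$ times a nonzero constant, and the homotopy invariance of Step~1 transfers this to general degree, with the index proportional to $\deg f$.

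\textbf{Main obstacle.} The main obstacle is Step 2: justifying the index formula, or the doubling, for the chirality boundary condition with only Lipschitz twisting, and handling the odd-dimensional case. If doubling proves awkward, I would instead deform the boundary condition to an APS-type condition and use the APS index theorem. In that approach the boundary correction terms should cancel, because the boundary involution $\tilcalG$ is a product of the two boundary chiralities.
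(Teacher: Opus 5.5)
\textbf{The main gap: the odd-dimensional case.} Your even-dimensional doubling is a reasonable alternative to the paper's use of Lott's index formula, provided you check that the graded twisted bundle and $\tilcalG$ really give the transmission condition and that the reflected half carries the same index. One correction: the doubled map has degree $\deg f$, not $2\deg f$ (count preimages of a regular value in the open upper hemisphere). This gives $\frac12\ind\hat D^+_{\hat f}=(-1)^{n/2}\deg f$, which matches the paper.

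The odd case, however, does not go through. For odd $n$ the double $\hat M$ is again odd-dimensional, not even-dimensional, and a Dirac-type operator on a closed odd-dimensional manifold has index zero. Splitting along $\pM$ therefore only gives $\ind\DD_{f,\GG}+\ind\DD_{f,-\GG}=0$. You know this already: $\cc(\nu)\otimes1$ commutes with $\GG$, so Green's formula makes $\DD_{f,-\GG}$ the adjoint of $\DD_{f,\GG}$. Your fallback fails for the same reason. $\hat A(M)\ch(\cdot)$ has no odd-degree component, so if the APS boundary corrections cancelled, the index would vanish.

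In odd dimensions the index is entirely a boundary effect. Along $\pM$ the twisted bundle is $\Sigma\pM\otimes(\p f)^*\Sigma\rmS^{n-1}$, and the $\pm1$-eigenbundles of $\GG$ are its chiral halves $F^\mp$. The formula of B\"ar--Brendle--Hanke--Wang then gives
\[
\ind\DD_{f,\GG}=-\tfrac12\ind D^+_{\p f}=-\tfrac12(-1)^{\frac{n-1}{2}}\deg(\p f)\,\chi(\rmS^{n-1})\neq0 .
\]
Some relation of this kind to the even-dimensional boundary is the missing idea.

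\textbf{A second problem: Step 1.} Conjugating by $U_f$ fixes the boundary condition but not the grading. Since $\oG$ interchanges Killing spinors of constants $-\frac12$ and $+\frac12$, the conjugated grading
\[
U_f(G\otimes f^*\oG)U_f^{-1}=G\otimes\big(\langle\oG\psi_\beta,\psi_\alpha\rangle\circ f\big)
\]
depends on $f$. Neither $D$ nor $D-\frac s2 Z_f$ for $0<s<1$ is odd for it. Your path therefore only controls the index of the total self-adjoint operator, which is always zero. If the path did preserve the chiral index, it would identify the index for $f$ with that for a constant map into $\p\rmS^n_+$ (where $Z=0$), a degree-zero case. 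That contradicts the $\deg f$-dependence you are trying to prove.

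Homotopy invariance has to be proved as in the paper. Homotope through boundary-preserving maps, and identify the bundles by parallel transport along the geodesic homotopy; this respects both the grading and $\tilcalG$.

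Two smaller points. Homotopy invariance cannot carry the model-case value from degree one to other degrees; linearity in $\deg f$ must come from the index formula itself. Also, when $\pM$ is disconnected, boundary-preserving homotopy classes are not determined by $\deg f$ alone.
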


\begin{proof}
For any Lipschitz map $f:M\to\rmS_+^n$, one can always find a smooth map which is $C^0$-close to $f$. This map, a priori, may not be boundary-preserving. But after a modification in a collar neighborhood of the boundary, we can eventually get a smooth boundary-preserving map $f_1:M\to\rmS_+^n$ with $\dist(f(x),f_1(x))<\pi$ for each $x\in M$. This gives a homotopy $f_t,t\in[0,1]$ of boundary-preserving maps between $f$ and $f_1$ by connecting $f(x)$ and $f_1(x)$ through the unique shortest geodesic in $\rmS_+^n$. This preserves the degree. Now spin parallel transport from $f_t(x)$ to $f_1(x)$ induces a Lipschitz unitary identification of $f_t^*\SigS_+^n$ and $f_1^*\SigS_+^n$. It preserves the $\ZZ_2$-grading and the boundary involution as $\oG$ is parallel and $\onu$ is parallel along the equator. One then gets a Lipschitz unitary identification of twisted spinor bundles $E_{f_t}$ and $E_{f_1}$ over $M$, again preserving the total $\ZZ_2$-grading and the boundary condition. Under this identification, $D_{f_t}^+$ and $D_{f_1}^+$ differ only by a zeroth-order $L^\infty$-potential which depends continuously on $t$. Thus $D_{f_t,\tilcalG_t}^+$ is a norm-continuous family of Fredholm operators, so the index is invariant. In odd dimensions, the invariance of $\ind\DD_{f_t,\GG_t}$ can be shown in a similar way. Therefore we can assume that $f$ is itself a \emph{smooth} map without loss of generality.

The index problem will be analysed according to the parity of $n=\dim M$ separately.

(\romannumeral1) Let $n$ be even. We compute the index of $D_{f,\tilcalG}^+$.

For $\varphi\in H_\tilcalG^1(M,E)$, it satisfies $\tilcalG\varphi|_\pM=\varphi|_\pM$. Write
\[
\varphi|_\pM=(u^{++},u^{--},u^{+-},u^{-+})
\]
with respect to the decomposition \eqref{E:E decom}. Since $\rmc(\cdot)$ (resp. $\bar{\rmc}(\cdot)$) interchanges $\Sigma^\pm M$ (resp. $\Sigma^\pm\rmS_+^n$), we have
\[
\begin{aligned}
& \big(\rmc(\nu)\otimes f^*\bar{\rmc}(\onu)\big)u^{++}=u^{--},\qquad \big(\rmc(\nu)\otimes f^*\bar{\rmc}(\onu)\big)u^{+-}=-u^{-+}, \\
& \big(\rmc(\nu)\otimes f^*\bar{\rmc}(\onu)\big)u^{--}=u^{++},\qquad \big(\rmc(\nu)\otimes f^*\bar{\rmc}(\onu)\big)u^{-+}=-u^{+-}.
\end{aligned}
\]
Actually the two lines are equivalent. This is the same as the boundary condition considered by Lott in \cite[Section 2.2]{Lott21pams}. (Note that \cite{Lott21pams} uses a different convention for the Clifford multiplication, but is equivalent to our setting.) %Subject to our setting, Clifford multiplication by the unit normal vector field is multiplied by $\pm1$ while changing chiralities.) 
Hence we can apply the index formula in \cite[Section 2.2]{Lott21pams} to get
\[
\ind D_{f,\tilcalG}^+=\int_M\hat{A}(M)f^*(\ch(\Sigma^+\rmS_+^n)-\ch(\Sigma^-\rmS_+^n))=(-1)^{\frac{n}{2}}\deg(f)\chi(\rmS_+^n)\ne0.
\]
Therefore, there must exist a nonzero element in $\ker D_{f,\tilcalG}^+$ or $\coker D_{f,\tilcalG}^+\cong\ker D_{f,\tilcalG}^-$. The proposition follows.

(\romannumeral2) Let $n$ be odd. In view of the interpretation in Remark \ref{R:D_f bc}, it suffices to compute the index of $\DD_{f,\GG}$.

Note that the restriction $\Sigma^+M|_\pM=\SS M|_\pM$ can be identified with the spinor bundle $\Sigma\pM=\SS\pM$ over the even-dimensional $\pM$ (with the induced spin structure), whose Clifford multiplication is $\rmc_\pM(\cdot)=\cc(\cdot)\cc(\nu)$. Moreover, the $\pm\sqrt{-1}$-eigenspaces of $\cc(\nu)$ give rise to the $\ZZ_2$-grading $\Sigma\pM=\Sigma^+\pM\oplus\Sigma^-\pM$. (See for example \cite[Section 2]{Baer96spinor} for details.)

From above, $\SS M\otimes f^*\SS\rmS_+^n|_\pM$ is a twisted spinor bundle $\Sigma\pM\otimes(\p f)^*\Sigma\rmS^{n-1}=:F$ with a total $\ZZ_2$-grading
\[
\begin{aligned}
& F^+:=\big(\Sigma^+\pM\otimes(\p f)^*\Sigma^+\rmS^{n-1}\big)\oplus\big(\Sigma^-\pM\otimes(\p f)^*\Sigma^-\rmS^{n-1}\big), \\
& F^-:=\big(\Sigma^+\pM\otimes(\p f)^*\Sigma^-\rmS^{n-1}\big)\oplus\big(\Sigma^-\pM\otimes(\p f)^*\Sigma^+\rmS^{n-1}\big).
\end{aligned}
\]
Also, $F^\pm$ corresponds exactly to the space of boundary values of $H_{\mp\GG}^1(M,\SS M\otimes f^*\SS\rmS_+^n)$.
Let $D_{\p f}$ denote the twisted Dirac operator on $F$. It is odd-graded with respect to the above grading. In addition, $D_{\p f}$ is an adapted operator to $\DD_f$ in the sense of \cite{BaerBallmann12}. Therefore, we can apply the index formula of B\"ar--Brendle--Hanke--Wang \cite[Theorem B.1]{BBHW24} to get
\[
\ind\DD_{f,\GG}=-\frac{1}{2}\ind D_{\p f}^+,
\]
where $D_{\p f}^+:H^1(\pM,F^+)\to L^2(\pM,F^-)$ is the positive chiral component of $D_{\p f}$. Its index can be computed by the classical Atiyah--Singer index theorem
\[
\begin{aligned}
\ind D_{\p f}^+ & =\int_\pM\hat{A}(\pM)(\p f)^*(\ch(\Sigma^+\rmS^{n-1})-\ch(\Sigma^-\rmS^{n-1})) \\
& =(-1)^{\frac{n-1}{2}}\deg(\p f)\chi(\rmS^{n-1})\ne0.
\end{aligned}
\]
So either $\ker\DD_{f,\GG}$ or $\ker\DD_{f,-\GG}$ is nontrivial. This shows the existence of a nonzero element in $\ker D_{f,\tilcalG}^+$, and completes the proof.
\end{proof}

\begin{remark}\label{R:ind D_f}
The idea of the above proof can be compared with \cite[Section 5]{Sun24hyp} (see also \cite{CHZ24fill-in}). Unlike \cite{Baer26hyp-rad}, we need neither spectral flow nor taking product with $\rmS^1$ here.
\end{remark}

\begin{corollary}\label{C:ker D_f chi}
There exists a nonzero $\varphi\in H^1(M,E)$ such that $D_f\varphi=0$ and $U_f\varphi=(\varphi_\alpha)_\alpha$ fulfills $\varphi_\alpha\in H_\calG^1(M,\SigM)$ or $\varphi_\alpha\in H_{-\calG}^1(M,\SigM)$ for every $\alpha$. In other words, $\varphi_\alpha$ satisfies either the positive or negative chirality boundary condition determined by $\calG=\rmc(\nu)G$.
\end{corollary}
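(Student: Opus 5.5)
The corollary follows from Proposition \ref{P:ind D_f} by transporting the boundary condition through $U_f$. In the proof of the proposition we actually produced a nonzero $\varphi\in\ker D_{f,\tilcalG}$. This was done either in $E^+$ or in $E^-$ (even case), or in the form $(\eta,\pm\eta)$ from $\ker\DD_{f,\pm\GG}$ (odd case). It therefore suffices to check that, for a suitable choice among these kernels, $\tilcalG\varphi|_\pM=\varphi|_\pM$ translates into a chirality condition on each component $\varphi_\alpha$.

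The computation uses \eqref{E:bc conju}:
\[
U_f\tilcalG U_f^{-1}(\varphi_\alpha)_\alpha=\Big(\sum_\beta\langle\bar{\rmc}(\onu)\oG\psi_\beta,\psi_\alpha\rangle\rmc(\nu)G\varphi_\beta\Big)_\alpha .
\]
By Lemma \ref{L:Killing spinor}, along $\p\rmS^n_+$ the frame $(\psi_\alpha)$ diagonalizes $\bar\calG=\bar{\rmc}(\onu)\oG$. The first half of the frame has eigenvalue $+1$ and the second half eigenvalue $-1$. Since $\p f$ maps $\pM$ into the equator, the pulled-back frame $\Psi_\alpha=f^*\psi_\alpha$ along $\pM$ also diagonalizes $f^*\bar\calG$ with the same eigenvalues $\epsilon_\alpha=\pm1$. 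Hence the matrix $\langle\bar\calG\psi_\beta,\psi_\alpha\rangle$ evaluated at $f(x)$, $x\in\pM$, equals $\epsilon_\alpha\delta_{\alpha\beta}$, and the displayed operator becomes diagonal:
\[
(\varphi_\alpha)_\alpha\mapsto(\epsilon_\alpha\calG\varphi_\alpha)_\alpha .
\]
The condition $\tilcalG\varphi=\varphi$ on $\pM$ is therefore equivalent to $\calG\varphi_\alpha|_\pM=\epsilon_\alpha\varphi_\alpha|_\pM$ for every $\alpha$. So $\varphi_\alpha\in H^1_\calG(M,\SigM)$ for the first half of the indices and $\varphi_\alpha\in H^1_{-\calG}(M,\SigM)$ for the second half. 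Each $\varphi_\alpha$ lies in $H^1$ because $U_f$ is a Lipschitz unitary isomorphism, so $\varphi\in H^1(M,E)$ forces $\varphi_\alpha\in H^1$.

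The main point to verify is that \eqref{E:bc conju} holds with exactly the factor $\bar{\rmc}(\onu)\oG$ and without extra signs. In particular, when $n$ is odd the grading operators interact with the off-diagonal Clifford multiplication, and one must make sure that the $\pm$ signs from $\tilcalG|_{E^\pm}$ in Remark \ref{R:D_f bc} are absorbed correctly. To check this I will write $\tilcalG(\varphi_\beta\otimes\Psi_\beta)=\rmc(\nu)G\varphi_\beta\otimes f^*\bar\calG\Psi_\beta$ directly and expand $f^*\bar\calG\Psi_\beta$ in the basis $\Psi_\alpha$. This bypasses the grading bookkeeping entirely. Nonvanishing of $\varphi$ and the equation $D_f\varphi=0$ are inherited directly from Proposition \ref{P:ind D_f}.
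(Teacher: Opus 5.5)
Your proposal is correct and follows essentially the same route as the paper. The paper takes any nonzero $\varphi\in\ker D_{f,\tilcalG}$ from Proposition \ref{P:ind D_f}, expands $\varphi=\sum_\alpha\varphi_\alpha\otimes\Psi_\alpha$, uses that the Killing frame of Lemma \ref{L:Killing spinor} diagonalizes $f^*(\bar{\rmc}(\onu)\oG)$ along $\pM$ with eigenvalues $\pm1$ (because $f(\pM)\subset\p\rmS^n_+$), and compares coefficients in $\tilcalG\varphi|_\pM=\varphi|_\pM$. This is exactly the direct expansion in your last paragraph, and it works for any element of the kernel, so no ``suitable choice'' among the kernels is needed.
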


\begin{proof}
By Proposition \ref{P:ind D_f}, there is a nonzero $\varphi\in H_\tilcalG^1(M,E)$ such that $D_f\varphi=0$. Write
\[
\varphi=\sum_\alpha\varphi_\alpha\otimes f^*\psi_\alpha
\]
as in \eqref{E:phi decomp}. Then
\[
\begin{aligned}
\tilcalG(\varphi|_\pM) & =\sum_\alpha\rmc(\nu)G(\varphi_\alpha|_\pM)\otimes f^*(\bar{\rmc}(\onu)\oG(\psi_\alpha|_\pM)) \\
& =\sum_{\text{first half of }\alpha\text{'s}}\rmc(\nu)G(\varphi_\alpha|_\pM)\otimes f^*(\psi_\alpha|_\pM) \\
& \quad -\sum_{\text{second half of }\alpha\text{'s}}\rmc(\nu)G(\varphi_\alpha|_\pM)\otimes f^*(\psi_\alpha|_\pM).
\end{aligned}
\]
Here $(\psi_\alpha)_\alpha$ are chosen to satisfy the positive or negative chirality boundary condition associated with $\oG$, as asserted in Lemma \ref{L:Killing spinor}. On the other hand,
\[
\tilcalG(\varphi|_\pM)=\varphi|_\pM=\sum_\alpha\varphi_\alpha|_\pM\otimes f^*(\psi_\alpha|_\pM).
\]
These combined imply $\varphi_\alpha\in H_\calG^1(M,\SigM)$ for the first half of $\alpha$'s, and $\varphi_\alpha\in H_{-\calG}^1(M,\SigM)$ for the second half of $\alpha$'s. The thesis then follows.
\end{proof}

%%%%%%%%%%%%%%%%%%%%%%
%%%%%%%%%%%%%%%%%%%%%%
\section{Proofs of Theorem \ref{T:Dirac eig HHS} and Corollary \ref{C:hemisp rigidity}}\label{S:pf thm cor}

In this section, we finish the proof of Theorem \ref{T:Dirac eig HHS}. As in \cite{Baer26hyp-rad}, the idea is to derive the desired eigenvalue estimate of $D$ from a section in the kernel of $D_f$. We then discuss the trace-refined hyper-hemispherical radius and present the proof of Corollary \ref{C:hemisp rigidity}.

%%%%%%%%%%%%%%%%%%%%%%
\subsection{Estimate of $D$ on the kernel of $D_f$}\label{SS:D est kerDf}

In this subsection, we provide some pointwise estimates for the action of $D$ on the kernel elements of $D_f$. This does not involve the boundary condition.

Let $\varphi=\sum_\alpha\varphi_\alpha\otimes\Psi_\alpha\in H^1(M,E)$ and $(\varphi_\alpha)_\alpha=U_f(\varphi)\in H^1(M,E_0)$ be as in last section. Then
\begin{equation}\label{E:Df and D}
D_f\varphi=\sum_\alpha D\varphi_\alpha\otimes\Psi_\alpha+\sum_{\alpha,i}\rmc(e_i)\varphi_\alpha\otimes (f^*\nabla^{\SigS_+^n})_{e_i}\Psi_\alpha,
\end{equation}
where $(e_i)_{i=1,\dots,n}$ is an orthonormal local frame of $TM$.

At a point $p\in M$ where $f$ is differentiable, introduce the following endomorphism of $E_p$:
\[
\tau_{v,w}:=\rmc(v)\rmc(w)\otimes f^*\big(\bar{\rmc}(d_pf(v))\bar{\rmc}(d_pf(w))\big)\quad\mbox{for }v,w\in T_pM.
\]
The next two lemmas are from \cite[Section 4]{Baer26hyp-rad}. We include the proofs for completeness.

\begin{lemma}\label{L:D squ equ}
If $\varphi\in\ker D_f$ and $U_f(\varphi)=(\varphi_\alpha)_\alpha$, then it holds almost everywhere that
\[
\sum_\alpha|D\varphi_\alpha\otimes\Psi_\alpha|^2=\frac{1}{4}\sum_{j,k}\langle\tau_{e_j,e_k}\varphi,\varphi\rangle.
\]
\end{lemma}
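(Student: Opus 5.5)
The plan is to start from the pointwise identity \eqref{E:Df and D} and use the Killing equation of Lemma \ref{L:Killing spinor} to rewrite the second sum. Since $\n^{\SigS_+^n}_\cdot\psi_\alpha=-\frac12\bar{\rmc}(\cdot)\psi_\alpha$, at a point $p$ where $f$ is differentiable the chain rule for pull-back connections gives
\[
(f^*\n^{\SigS_+^n})_{e_i}\Psi_\alpha=-\tfrac12 f^*\big(\bar{\rmc}(d_pf(e_i))\big)\Psi_\alpha .
\]
This holds almost everywhere by Rademacher's theorem. Plugging this into \eqref{E:Df and D} with $D_f\varphi=0$ yields the almost-everywhere identity
\[
\sum_\alpha D\varphi_\alpha\otimes\Psi_\alpha=\frac12\sum_i\big(\rmc(e_i)\otimes f^*\bar{\rmc}(d_pf(e_i))\big)\varphi=:\frac12 A\varphi .
\]
Here we write $\varphi=\sum_\alpha\varphi_\alpha\otimes\Psi_\alpha$ and let the operator act on the tensor product.

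Next we take pointwise norms. The $\Psi_\alpha$ are orthonormal at every point of $M$, so the sections $D\varphi_\alpha\otimes\Psi_\alpha$ are mutually orthogonal in $E_p$. Hence
\[
\Big|\sum_\alpha D\varphi_\alpha\otimes\Psi_\alpha\Big|^2=\sum_\alpha|D\varphi_\alpha\otimes\Psi_\alpha|^2 .
\]
This reduces the left-hand side of the lemma to $\frac14|A\varphi|^2=\frac14\langle A^*A\varphi,\varphi\rangle$.

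The remaining step is to identify $A^*A$ with $\sum_{j,k}\tau_{e_j,e_k}$, and this is where care with signs is needed. Clifford multiplication by a real tangent vector is skew-adjoint on both $\SigM$ and $\SigS_+^n$, including the odd-dimensional doubled bundles \eqref{E:spinor bundle-odd}. Therefore each tensor product $\rmc(e_j)\otimes\bar{\rmc}(df(e_j))$ is self-adjoint, being a product of two skew-adjoint commuting factors, so $A^*=A$. Then
\[
A^*A=A^2=\sum_{j,k}\rmc(e_j)\rmc(e_k)\otimes\bar{\rmc}(df(e_j))\bar{\rmc}(df(e_k))=\sum_{j,k}\tau_{e_j,e_k},
\]
using $(a\otimes b)(c\otimes d)=ac\otimes bd$ for operators acting on different tensor factors; no grading signs appear, since we use the ordinary rather than graded tensor product here. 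Combining the three steps gives the claimed identity almost everywhere.

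The only genuine obstacle is justifying the pull-back derivative formula for the merely Lipschitz $f$. It should hold because $\Psi_\alpha=\psi_\alpha\circ f$ is Lipschitz, with $\psi_\alpha$ smooth, so the chain rule applies at every point of differentiability of $f$. Moreover, the $H^1$ identity \eqref{E:Df and D} is the product rule for $\varphi_\alpha\in H^1$ times a Lipschitz section.
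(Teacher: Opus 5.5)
Your proof is correct and follows the paper's argument: substitute the Killing equation into \eqref{E:Df and D}, use $D_f\varphi=0$ and the pointwise orthonormality of the $\Psi_\alpha$, and expand the square. Your observation that $A=\sum_i\rmc(e_i)\otimes f^*\bar{\rmc}(df(e_i))$ is self-adjoint, so that $|A\varphi|^2=\langle A^2\varphi,\varphi\rangle$, is exactly what the paper uses implicitly when it moves one factor across the inner product in its double-sum expansion.
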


\begin{proof}
For $\varphi\in\ker D_f$, at a point where $f$ is differentiable, by \eqref{E:Killing spinor} and \eqref{E:Df and D},
\[
\begin{aligned}
& \sum_\alpha|D\varphi_\alpha\otimes\Psi_\alpha|^2 \\
& =\Big|\sum_{\alpha,i}\rmc(e_i)\varphi_\alpha\otimes (f^*\nabla^{\SigS_+^n})_{e_i}\Psi_\alpha\Big|^2 \\
& =\frac{1}{4}\Big|\sum_{\alpha,i}\rmc(e_i)\varphi_\alpha\otimes f^*\big(\bar{\rmc}(df(e_i))\big)\Psi_\alpha\Big|^2 \\
& =\frac{1}{4}\sum_{\beta,\gamma,j,k}\Big\langle\rmc(e_j)\varphi_\beta\otimes f^*\big(\bar{\rmc}(df(e_j))\big)\Psi_\beta,\rmc(e_k)\varphi_\gamma\otimes f^*\big(\bar{\rmc}(df(e_k))\big)\Psi_\gamma\Big\rangle \\
& =\frac{1}{4}\sum_{\beta,\gamma,j,k}\Big\langle\rmc(e_j)\rmc(e_k)\varphi_\beta\otimes f^*\big(\bar{\rmc}(df(e_j))\bar{\rmc}(df(e_k))\big)\Psi_\beta,\varphi_\gamma\otimes\Psi_\gamma\Big\rangle \\
& =\frac{1}{4}\sum_{j,k}\langle\tau_{e_j,e_k}\varphi,\varphi\rangle.
\end{aligned}
\]
\end{proof}

\begin{lemma}\label{L:D squ est}
Let $\varphi\in\ker D_f$ with $U_f(\varphi)=(\varphi_\alpha)_\alpha$, then
\begin{equation}\label{E:D squ est}
\sum_\alpha|D\varphi_\alpha|^2\le\frac{1}{4}|df|_1^2\sum_\alpha|\varphi_\alpha|^2
\end{equation}
holds almost everywhere on $M$. Here $|\cdot|_1$ denotes the trace norm.

Moreover, if $f$ is differentiable at $p$, if $d_pf$ is a linear isometry, and if equality in \eqref{E:D squ est} is achieved at $p$, then
\begin{equation}\label{E:D squ est eq}
\tau_{v,w}\varphi(p)=\varphi(p)
\end{equation}
for all orthonormal $v,w\in T_pM$.
\end{lemma}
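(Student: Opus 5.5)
The plan is to follow Bär's argument: start from Lemma \ref{L:D squ equ} and estimate the right-hand side pointwise using a singular value decomposition of $d_pf$. Fix a point $p$ where $f$ is differentiable. Choose orthonormal bases $(e_i)$ of $T_pM$ and $(\oe_i)$ of $T_{f(p)}\rmS_+^n$ such that $d_pf(e_i)=\mu_i\oe_i$ with $\mu_i\ge0$; then $|df|_1=\sum_i\mu_i$.

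Since $\Psi_\alpha$ is an orthonormal frame of $f^*\SigS_+^n$, the left side of Lemma \ref{L:D squ equ} equals $\sum_\alpha|D\varphi_\alpha|^2$. Likewise $|\varphi|^2=\sum_\alpha|\varphi_\alpha|^2$. Hence it suffices to show
\[
\sum_{j,k}\langle\tau_{e_j,e_k}\varphi,\varphi\rangle\le\Big(\sum_i\mu_i\Big)^2|\varphi|^2 .
\]

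In the adapted frame,
\[
\tau_{e_j,e_k}=\mu_j\mu_k\,\rmc(e_j)\rmc(e_k)\otimes\bar{\rmc}(\oe_j)\bar{\rmc}(\oe_k).
\]
The operator $\sigma_{jk}:=\rmc(e_j)\rmc(e_k)\otimes\bar{\rmc}(\oe_j)\bar{\rmc}(\oe_k)$ is unitary, since it is a product of Clifford multiplications by unit vectors. For $j=k$ it equals $(-1)(-1)=\id$. For $j\ne k$ it is self-adjoint, because
\[
(\rmc(e_j)\rmc(e_k))^*=\rmc(e_k)\rmc(e_j)=-\rmc(e_j)\rmc(e_k),
\]
and the same holds on the target factor, so the two signs cancel. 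Consequently $\sigma_{jk}$ is a self-adjoint unitary, i.e.\ an involution with $\langle\sigma_{jk}\varphi,\varphi\rangle\le|\varphi|^2$, with equality iff $\sigma_{jk}\varphi=\varphi$. Summing gives
\[
\sum_{j,k}\mu_j\mu_k\langle\sigma_{jk}\varphi,\varphi\rangle\le\sum_{j,k}\mu_j\mu_k|\varphi|^2=|df|_1^2|\varphi|^2,
\]
which yields \eqref{E:D squ est} almost everywhere, since $f$ is Lipschitz and hence differentiable a.e.\ by Rademacher.

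For the equality statement, suppose $d_pf$ is a linear isometry, so all $\mu_i=1$ and any orthonormal basis $(e_i)$ is adapted with $\oe_i=d_pf(e_i)$. If equality holds, then each term must be extremal: $\langle\sigma_{jk}\varphi,\varphi\rangle=|\varphi|^2$ for all $j\ne k$, which forces $\tau_{e_j,e_k}\varphi(p)=\varphi(p)$. Given arbitrary orthonormal $v,w$, complete them to an orthonormal basis with $e_1=v$, $e_2=w$ and apply this to get \eqref{E:D squ est eq}.

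The main point needing care is that Lemma \ref{L:D squ equ} is basis-independent. The sum $\sum_{j,k}\tau_{e_j,e_k}$ equals $\big(\sum_j\rmc(e_j)\otimes\bar{\rmc}(df(e_j))\big)^2$, which does not depend on the choice of frame. This justifies using the singular value frame and completing $v,w$ to any orthonormal basis.
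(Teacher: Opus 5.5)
Your proposal is correct and follows the paper's proof essentially step for step. Both use the singular-value frame $d_pf(e_i)=\mu_i\oe_i$, the termwise bound $\langle\tau_{e_j,e_k}\varphi,\varphi\rangle\le\mu_j\mu_k|\varphi|^2$, and the freedom to choose the frame when $d_pf$ is an isometry. Your extra remarks make explicit what the paper leaves implicit: that each $\rmc(e_j)\rmc(e_k)\otimes\bar{\rmc}(\oe_j)\bar{\rmc}(\oe_k)$ is a self-adjoint unitary involution, and that $\sum_{j,k}\tau_{e_j,e_k}=\big(\sum_j\rmc(e_j)\otimes\bar{\rmc}(df(e_j))\big)^2$ does not depend on the frame.
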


\begin{proof}
Fix a point $p\in M$ where $f$ is differentiable and choose orthonormal bases $(e_i)_{i=1,\dots,n}$ of $T_pM$ and $(\oe_i)_{i=1,\dots,n}$ of $T_{f(p)}\rmS^n_+$ such that $d_pf(e_i)=\mu_i\oe_i$, where $0\le\mu_1\le\dots\le\mu_n$ are the singular values of $d_pf$.

Since $(\Psi_\alpha)_\alpha$ are pointwise orthonormal, by Lemma \ref{L:D squ equ},
\[
\sum_\alpha|D\varphi_\alpha|^2=\sum_\alpha|D\varphi_\alpha\otimes\Psi_\alpha|^2=\frac{1}{4}\sum_{j,k}\langle\tau_{e_j,e_k}\varphi,\varphi\rangle.
\]
In our choice of orthonormal bases,
\[
\tau_{e_j,e_k}=\mu_j\mu_k\rmc(e_j)\rmc(e_k)\otimes f^*\big(\bar{\rmc}(\oe_j)\bar{\rmc}(\oe_k)\big).
\]
Note that
\[
\big\langle\big(\rmc(e_j)\rmc(e_k)\otimes f^*\big(\bar{\rmc}(\oe_j)\bar{\rmc}(\oe_k)\big)\big)\varphi,\varphi\big\rangle\le|\varphi|^2
\]
with equality if and only if $j=k$ or
\[
\big(\rmc(e_j)\rmc(e_k)\otimes f^*\big(\bar{\rmc}(\oe_j)\bar{\rmc}(\oe_k)\big)\big)\varphi=\varphi\quad\mbox{for }j\ne k.
\]
Therefore,
\begin{equation}\label{E:D squ est 2}
\sum_\alpha|D\varphi_\alpha|^2\le\frac{1}{4}\sum_{j,k}\mu_j\mu_k|\varphi|^2=\frac{1}{4}|df|_1^2\sum_\alpha|\varphi_\alpha|^2.
\end{equation}

Now assume $d_pf$ is a linear isometry. Then $\mu_1=\cdots=\mu_n=1$ at $p$, and the orthonormal basis $(e_i)_{i=1,\dots,n}$ can be chosen arbitrarily. So the equality in \eqref{E:D squ est 2} yields \eqref{E:D squ est eq}.
\end{proof}

%%%%%%%%%%%%%%%%%%%%%%
\subsection{Proof of Theorem \ref{T:Dirac eig HHS}}\label{SS:pf thm}

Let $f:M\to\rmS_+^n$ be a boundary-preserving Lipschitz map of nonzero degree with $\Lip(f)=1/\Rad_{\rmS_+^n}(M)$. From Corollary \ref{C:ker D_f chi}, one can find a nonzero $\varphi\in\ker D_f$ such that $U_f(\varphi)=(\varphi_\alpha)_\alpha$ satisfies the positive or negative chirality boundary condition. By Remark \ref{R:chi bc}, either case can be used in the square Dirac eigenvalue estimate. Integrating \eqref{E:D squ est} over $M$ and using the fact that $|df|_1\le n/\Rad_{\rmS_+^n}(M)$, we obtain
\begin{equation}\label{E:Dirac eig-pf}
\lambda_1(D)^2\sum_\alpha\|\varphi_\alpha\|^2_{L^2(M)}\le\sum_\alpha\|D\varphi_\alpha\|^2_{L^2(M)}\le\frac{n^2}{4\Rad_{\rmS_+^n}(M)^2}\sum_\alpha\|\varphi_\alpha\|^2_{L^2(M)}.
\end{equation}
So \eqref{E:Dirac eig HHS} is proved.

For the ``if'' part of the equality statement, we know from \eqref{E:HHS R-hemisp} that $\Rad_{\rmS_+^n}(M)=R$ for $M=\rmS_+^n(R)$. On the other hand, by Theorem \ref{T:Dirac eig lower},
\[
\lambda_1(D)^2\ge\frac{n}{4(n-1)}\cdot\frac{n(n-1)}{R^2}=\frac{n^2}{4R^2}.
\]
Combined with \eqref{E:Dirac eig HHS}, equality holds in \eqref{E:Dirac eig HHS}.

The proof of the ``only if'' part is mostly in line with B\"ar's strategy in \cite[Sections 5.5--5.7]{Baer26hyp-rad}. We give a sketch below showing the necessary modifications adapted to our setting.

By rescaling the metric, we assume that $\Rad_{\rmS_+^n}(M)=1$. For the nonzero section $\varphi$ given earlier, equalities in \eqref{E:D squ est} and \eqref{E:Dirac eig-pf} imply the following facts:
\begin{enumerate}
\item \label{It:equality facts-1} For every $\alpha$, $\varphi_\alpha$ belongs to the $n^2/4$-eigenspace of $D_\calG^2$ or $D_{-\calG}^2$ (cf. Remark \ref{R:Dirac square eig}), so $\varphi_\alpha$ is smooth;
\item \label{It:equality facts-2} The zero locus $\calL$ of $U_f(\varphi)=(\varphi_\alpha)_\alpha$ has Hausdorff codimension at least 2 in $M$;
\item \label{It:equality facts-3} $df$ is a linear isometry almost everywhere;
\item \label{It:equality facts-4} $\tau_{v,w}\varphi(p)=\varphi(p)$ for almost all $p\in M$ and any orthonormal $v,w\in T_pM$.
\end{enumerate}
To see \ref{It:equality facts-1}, equalities in \eqref{E:Dirac eig-pf} imply $\|D\varphi_\alpha\|^2_{L^2(M)}=\lambda_1(D)^2\|\varphi_\alpha\|^2_{L^2(M)}$ with $\lambda_1(D)^2=n^2/4$, which means that each $\varphi_\alpha$ lies in the span of the eigenspinors of $D_\calG$ or $D_{-\calG}$ with eigenvalues $\pm\lambda_1(D)=\pm n/2$. Thus $\varphi_\alpha$ is smooth. It also follows that $\varphi_\alpha$ belongs to $\dom(D_\calG^2)$ or $\dom(D_{-\calG}^2)$, and $D^2\varphi_\alpha=\frac{n^2}{4}\varphi_\alpha$. To see \ref{It:equality facts-2}, put
\[
\Phi=\left(
\begin{matrix}
(\varphi_\alpha)_\alpha \\
\frac{2}{n}D(\varphi_\alpha)_\alpha
\end{matrix}\right),\qquad
\calD=\left(
\begin{matrix}
0 & D \\
D & 0
\end{matrix}\right).
\]
Then $\calD\Phi=\frac{n}{2}\Phi$. From \eqref{E:D squ est}, \eqref{E:Dirac eig-pf} and the fact that $|df|_1\le n$, we have the pointwise equality $\sum_\alpha|D\varphi_\alpha|^2=\frac{n^2}{4}\sum_\alpha|\varphi_\alpha|^2$. Hence $(\varphi_\alpha)_\alpha$ and $\Phi$ have the same zero locus. Applying the main theorem of \cite{Baer97nodal} (which holds for compact manifolds with boundary) to $\Phi$ gives the assertion. Equalities in \eqref{E:Dirac eig-pf} also imply $|df|_1=n$ almost everywhere on $M\setminus\calL$. Together with the fact that $\Lip(f)=1$, we get $\mu_1=\cdots=\mu_n=1$ almost everywhere. This shows \ref{It:equality facts-3}. \ref{It:equality facts-4} then follows from the equality condition of \eqref{E:D squ est}.

Note that $M$ is connected and $\rmS_+^n$ is simply connected, to prove that $f$ is a Riemannian isometry, we only need to show that $f$ is a smooth Riemannian covering.
By \cite[Theorem 1]{KMS19arma} (which again holds for compact manifolds with smooth boundary), it suffices to show that $df$ preserves (or reverses) the orientation almost everywhere.

\begin{lemma}\label{L:vol form action}
The above nonzero spinor $\varphi\in\ker D_f$ can always be chosen such that
\begin{equation}\label{E:vol form action}
(\rmc(\omega_\CC)\otimes f^*\bar{\rmc}(\bar{\omega}_\CC))\varphi=\epsilon\varphi,
\end{equation}
where $\epsilon=\pm1$ is constant.
\end{lemma}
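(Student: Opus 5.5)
The plan is to show that the involution $\Omega:=\rmc(\omega_\CC)\otimes f^*\bar{\rmc}(\bar{\omega}_\CC)$ preserves everything used to produce $\varphi$. We then split $\varphi$ into its $\pm1$-eigencomponents and keep a nonzero one.

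First consider $n$ even. Here $\rmc(\omega_\CC)=G$ and $\bar{\rmc}(\bar{\omega}_\CC)=\oG$ by Convention \ref{Con:chi op}. So $\Omega=G\otimes f^*\oG$ is exactly the total $\ZZ_2$-grading \eqref{E:E decom}, and $\Omega=\pm1$ on $E^\pm$. In the proof of Proposition \ref{P:ind D_f}, $\varphi$ was taken in $\ker D^+_{f,\tilcalG}$ or $\ker D^-_{f,\tilcalG}$, so it already lies in $E^+$ or $E^-$, and \eqref{E:vol form action} holds with $\epsilon=\pm1$. If instead we start from an arbitrary nonzero $\varphi\in\ker D_{f,\tilcalG}$, we check three things: $\Omega$ anticommutes with $D_f$ (because $D_f$ is odd), $\Omega$ commutes with $\tilcalG$ (Remark \ref{R:D_f bc}), and the projections $\frac12(1\pm\Omega)$ preserve $\ker D_{f,\tilcalG}$ and the boundary condition. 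One of the components $\varphi^\pm$ is then nonzero.

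Now consider $n$ odd. In this case $\omega_\CC$ acts on $\SigM=\SS M\oplus\SS M$ through $\rmc$ from \eqref{E:spinor bundle-odd}. Since $\rmc(e_i)$ is off-diagonal, a product of $n$ (odd) such factors is off-diagonal, so $\rmc(\omega_\CC)=\left(\begin{smallmatrix}0&\cc(\omega_\CC)\\ \cc(\omega_\CC)&0\end{smallmatrix}\right)=\left(\begin{smallmatrix}0&1\\1&0\end{smallmatrix}\right)$ up to the sign convention for $\cc(\omega_\CC)=\pm\id$. The same holds on the hemisphere.

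1. Verify that $\Omega$ commutes with $D_f$. Write $\rmc(\omega_\CC)=:S$. $S$ is parallel, and it commutes with $\rmc(e_i)$ because both are built from the same off-diagonal block structure with central $\cc(\omega_\CC)$. The second factor is parallel and acts only on the twisting bundle. Hence $\Omega$ commutes with $D_f$.

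2. Verify that $\Omega$ commutes with $\tilcalG=\rmc(\nu)G\otimes f^*(\bar{\rmc}(\onu)\oG)$. With $G=G_0=\mathrm{diag}(1,-1)$, $S$ anticommutes with $G$ and commutes with $\rmc(\nu)$, so $S$ anticommutes with $\rmc(\nu)G$. Likewise $\bar S$ anticommutes with $\bar{\rmc}(\onu)\oG$. The tensor product therefore commutes, since the two signs cancel.

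3. Note that $\Omega$ is parallel and unitary, so it preserves $H^1_\tilcalG(M,E)$. It follows that $\ker D_{f,\tilcalG}$ splits into the $\pm1$-eigenspaces of $\Omega$, and a nonzero $\varphi$ from Proposition \ref{P:ind D_f} has a nonzero component in one of them. We replace $\varphi$ by that component.

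This is all that matters downstream. Corollary \ref{C:ker D_f chi} and the equality analysis use only $\varphi\in\ker D_f\cap H^1_\tilcalG$, so the replacement costs nothing.

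The main obstacle is the sign bookkeeping in the odd case. One must confirm that $S$ really commutes with $\rmc(e_i)$ and anticommutes with $G_0$, using the normalization $(\sqrt{-1})^{[\frac{n+1}{2}]}$ and $\cc(\omega_\CC)=\id$. One must also confirm that $\Omega$ squares to $1$ (it does, since $S^2=1=\bar S^2$). If a sign goes wrong and $\Omega$ anticommutes with $D_f$ in the odd case, the argument still works. In that case I would use the fact that $\varphi$ lies in $\ker D_f^\pm$, exactly as in the even case, and check which grading $\Omega$ induces there.
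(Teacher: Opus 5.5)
Your proposal is correct and follows the paper's own proof. In the even case $\varphi$ already lies in $E^+$ or $E^-$; in the odd case you check that $\Omega$ commutes with $D_f$ and with $\tilcalG$, then replace $\varphi$ by a nonzero $\tfrac12(1\pm\Omega)\varphi$. You verify these commutations explicitly, whereas the paper only asserts them. The paper also records that $\Omega$ commutes with $G\otimes f^*\oG$, so the new spinor stays in $E^+$ or $E^-$; you omit this, but as you observe, the later argument only needs $\varphi\in\ker D_f\cap H^1_{\tilcalG}(M,E)$.
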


\begin{proof}
When $n$ is even, this is immediate from Proposition \ref{P:ind D_f} as $\varphi$ is a section of either $E^+$ or $E^-$. When $n$ is odd, note that $\rmc(\omega_\CC)\otimes f^*\bar{\rmc}(\bar{\omega}_\CC)$ commutes with $D_f$, $G\otimes f^*\oG$, and $\tilcalG$. Hence one may replace $\varphi$ by a nonzero one of
\[
\frac{1}{2}\big(1\pm\rmc(\omega_\CC)\otimes f^*\bar{\rmc}(\bar{\omega}_\CC)\big)\varphi
\]
such that it is still in $\ker D_f$, still lying in $E^+$ or $E^-$, and satisfies the same boundary condition as $\varphi$ does.
\end{proof}

With this lemma, choosing a positively-oriented orthonormal basis $(e_i)_{i=1,\dots,n}$ of $T_pM$, with $f$ differentiable at $p$, we can write
\begin{equation}\label{E:df orient}
\begin{aligned}
&\quad (-1)^{[\frac{n+1}{2}]}(\rmc(\omega_\CC)\otimes f^*\bar{\rmc}(\bar{\omega}_\CC))\varphi(p) \\
& =\sigma(p)\big(\rmc(e_1)\cdots\rmc(e_n)\otimes f^*(\bar{\rmc}(df(e_1))\cdots\bar{\rmc}(df(e_n)))\big)\varphi(p).
\end{aligned}
\end{equation}
By \ref{It:equality facts-3}, $\sigma(p)=\pm1$ if $d_pf$ is orientation preserving/reversing. It then remains to show that $\sigma$ is constant almost everywhere.

If $n$ is even, then the right-hand side of \eqref{E:df orient} is exactly
\[
\sigma(p)\cdot\tau_{e_1,e_2}\circ\cdots\circ\tau_{e_{n-1},e_n}\varphi(p)=\sigma(p)\varphi(p)
\]
by \ref{It:equality facts-4}. Combined with \eqref{E:vol form action}, $\sigma=(-1)^{[\frac{n+1}{2}]}\epsilon$ almost everywhere.

If $n$ is odd, the action of $\rmc(e_1)\otimes f^*\bar{\rmc}(df(e_1))$ is left after a simplification using  \ref{It:equality facts-4}. Since $e_1$ is chosen arbitrarily, we have
\[
(\rmc(v)\otimes f^*\bar{\rmc}(df(v)))\varphi(p)=(-1)^{[\frac{n+1}{2}]}\epsilon\sigma(p)\varphi(p)
\]
for any unit $v\in T_pM$. By Lemma \ref{L:D_f vs. D} and \cite[Lemma 3]{Baer26hyp-rad} (a pointwise formula, so independent of the boundary presence), a computation yields for each $\alpha$,
\[
D\varphi_\alpha=(-1)^{[\frac{n+1}{2}]}\frac{n\epsilon}{2}\sigma(p)\varphi_\alpha.
\]
By \ref{It:equality facts-1}, $\sigma$ is smooth and hence locally constant on $M\setminus\calL$. It follows from \ref{It:equality facts-2} that $M\setminus\calL$ is connected. Consequently, $\sigma$ is constant almost everywhere. This finishes the proof of the ``only if'' part, hence the whole proof of Theorem \ref{T:Dirac eig HHS}.

%%%%%%%%%%%%%%%%%%%%%%
\subsection{A refined hyper-hemispherical radius}\label{SS:ref HHS}

Similar to \cite[Section 7.2]{Baer26hyp-rad}, from the proof of Theorem \ref{T:Dirac eig HHS}, our upper Dirac eigenvalue estimate in fact holds in the more precise form of \eqref{E:Dirac eig HHS-tr}. We repeat it below:
\begin{equation}\label{E:Dirac eig HHS-tr-2}
\lambda_1(D)^2\le\frac{n^2}{4\Rad_{\rmS_+^n}^{\rm tr}(M)^2},
\end{equation}
where the \emph{trace-refined hyper-hemispherical radius} $\Rad_{\rmS_+^n}^{\rm tr}(M)$ is defined to be the supremum of all $R>0$ such that there exists a boundary-preserving Lipschitz map $f:M\to\rmS_+^n$ of nonzero degree such that the trace norm $|df|_1\le n/R$ almost everywhere. It is clear that
\[
\Rad_{\rmS_+^n}^{\rm tr}(M)\ge\Rad_{\rmS_+^n}(M).
\]

The following example indicates that the above inequality can be strict, so \eqref{E:Dirac eig HHS-tr-2} is indeed a stronger estimate.

\begin{example}\label{Eg:sph cap-tr}
Consider the spherical cap
\[
M=\{x\in\rmS^n:\dist(x,p_0)\le\rho\}
\]
as in Example \ref{Eg:sph cap} again. Then for the map $f:M\to\rmS_+^n$ defined in \eqref{E:sph cap map}, we have
\[
|df|_1\le
\begin{cases}
\frac{n\pi}{2\rho}, & 0<\rho\le\frac{\pi}{2}, \\
\frac{\pi}{2\rho}+\frac{n-1}{\sin\rho}, & \frac{\pi}{2}<\rho<\pi,
\end{cases}
\]
so that
\[
\Rad_{\rmS_+^n}^{\rm tr}(M)\ge\begin{cases}
\frac{2\rho}{\pi}, & 0<\rho\le\frac{\pi}{2}, \\
\frac{2n\rho\sin\rho}{\pi\sin\rho+2(n-1)\rho}, & \frac{\pi}{2}<\rho<\pi.
\end{cases}
\]
Compared with \eqref{E:sph cap rad}, we see that $\Rad_{\rmS_+^n}^{\rm tr}(M)>\Rad_{\rmS_+^n}(M)$ for $\pi/2<\rho<\pi$.
\end{example}

However, it should be pointed out that the equality case of \eqref{E:Dirac eig HHS-tr-2} does not necessarily lead to a hemisphere, as the following proposition illustrates.

\begin{proposition}\label{P:non-hemisp eq}
Let $\Omega\subset\RR^{n+1}$ ($n\ge2$) be a smooth connected closed hypersurface with positive principal curvatures which is reflection symmetric about the hyperplane $P=\{x_{n+1}=0\}$. Set
\[
M=\Omega\cap\{x_{n+1}\ge0\},\qquad g=H^2g_0|_M,
\]
where $g_0$ is the induced metric on $\Omega$ and $H$ denotes the normalized (positive) mean curvature of $\Omega$ (as the boundary of a Euclidean domain). Then
\begin{equation}\label{E:non-hemisp eq}
\Rad_{\rmS_+^n}^{\rm tr}(M,g)=1,\qquad \lambda_1(D_g)^2=\frac{n^2}{4},
\end{equation}
where $D_g$ denotes the spin Dirac operator on $M$ with respect to the metric $g$, and the eigenvalue is under a chirality boundary condition.

Moreover, if $\Omega$ is not a Euclidean sphere, then $(M,g)$ is not a round hemisphere.
\end{proposition}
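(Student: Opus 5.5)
The plan is to use the Gauss map $N:\Omega\to\rmS^n$, which by positive principal curvatures is a diffeomorphism. By the reflection symmetry it maps $\Omega\cap P$ onto the equator and $M$ onto $\rmS_+^n$; this needs that the outward normal is horizontal along $\Omega\cap P$. Set $f:=N|_M:(M,g)\to\rmS_+^n$. It is a boundary-preserving diffeomorphism, so $\deg f=\pm1$.

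\textbf{Step 1: the trace norm of $df$.} With respect to $g_0$, $dN$ is the shape operator, with eigenvalues the principal curvatures $\kappa_1,\dots,\kappa_n>0$. Since $g=H^2g_0$ with $H=\frac1n\sum\kappa_i$, the singular values of $df$ with respect to $g$ are $\kappa_i/H$. Hence
\[
|df|_1=\frac{1}{H}\sum_i\kappa_i=n
\]
everywhere. This gives $\Rad^{\rm tr}_{\rmS_+^n}(M,g)\ge1$, and then \eqref{E:Dirac eig HHS-tr-2} yields $\lambda_1(D_g)^2\le n^2/4$.

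\textbf{Step 2: the reverse inequalities.} I would use the Hijazi--Montiel--Rold\'an bound, Theorem~\ref{T:Dirac eig lower}, together with the conformal covariance of the Dirac operator. The cleanest route is to produce explicit spinors on $(M,g)$. Take restrictions of parallel spinors of $\RR^{n+1}$ to $\Omega$, which are extrinsic Killing-type spinors satisfying $\nabla_X\psi=\frac12\,\mathrm{II}(X)\cdot\psi$. Alternatively, take the conformal transform of $f^*$-pulled-back Killing spinors. In either case the goal is to show that $D_{g_0}\psi=\frac n2H\psi$ for suitable $\psi$. By conformal covariance, $\tilde\psi=H^{-(n-1)/2}\psi$ then satisfies $D_g\tilde\psi=\frac n2\tilde\psi$.

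For the chirality boundary condition, I would choose $\psi$ so that $\rmc(\nu)G\psi=\psi$ on $\partial M$. This uses the symmetry: the unit normal of $\partial M$ in $\Omega$ is $e_{n+1}$, and Clifford multiplication by the constant vector $e_{n+1}$ acts compatibly with a chirality. Hence $n/2$ is an eigenvalue, so $\lambda_1^2\le n^2/4$, which is consistent with Step~1.

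For the lower bound $\lambda_1^2\ge n^2/4$, which is the main obstacle since $\scal_g$ need not be $\ge n(n-1)$, I would use the refined Hijazi-type inequality. Any eigenvalue satisfies
\[
\lambda^2\ge\frac{n}{4(n-1)}\inf(\scal_{\tilde g}\,e^{2u})
\]
for conformal metrics $\tilde g=e^{2u}g$, with the mean-curvature assumption conformally transformed. This is the Hijazi conformal trick adapted to chirality boundary conditions, as in \cite{HMR02,Raulot06}, and it also follows from \eqref{E:Dirac eig con Lap}. Applying it with $\tilde g=g_0$, i.e.\ conformal factor $H^{-1}$, amounts to the known extrinsic estimate for hypersurfaces,
\[
\lambda_1(D_g)^2\ge\frac{n^2}{4}.
\]
This is Hijazi--Montiel--Zhang type: for a Euclidean hypersurface, eigenvalues of $D_{H^2g_0}$ are at least $n/2$ in absolute value. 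Its boundary version holds because $\partial M$ is totally geodesic in $(\Omega,g_0)$ by the reflection symmetry, so the boundary term in the Reilly/Schrödinger--Lichnerowicz formula vanishes; it also vanishes in $g$, since $\partial M$ is umbilic there and its mean curvature is zero by symmetry. Given this, $\lambda_1^2=n^2/4$. Then $\Rad^{\rm tr}\le1$ follows from \eqref{E:Dirac eig HHS-tr-2}, because $\Rad^{\rm tr}>1$ would force $\lambda_1^2<n^2/4$.

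\textbf{Step 3: non-roundness.} If $(M,g)$ were a round unit hemisphere, $f$ would be a map with $|df|_1=n$ everywhere. Roundness gives $\scal_g=n(n-1)$. Computing $\scal_{H^2g_0}$ via Gauss' equation and the conformal change formula then forces the $\kappa_i$ to coincide. Alternatively, the equality case of Theorem~\ref{T:Dirac eig HHS} applied through the HMR rigidity implies that $df$ is an isometry, so all $\kappa_i=H$. Either way $\Omega$ would be totally umbilic, hence a sphere.
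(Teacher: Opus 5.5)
Your Step 1 is exactly the paper's argument. Your explicit eigenspinors (restricted parallel spinors times $H^{-(n-1)/2}$, taken in the $+1$-eigenspace of $-\sqrt{-1}\gamma(e_{n+1})$) are correct, but they only reprove $\lambda_1(D_g)^2\le n^2/4$.

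\textbf{The gap: the lower bound $\lambda_1(D_g)^2\ge n^2/4$.} You claim the conformal Hijazi inequality with $\tilde g=g_0$ (i.e.\ $e^{2u}=H^{-2}$) ``amounts to'' the extrinsic estimate. This is false. By the Gauss equation, $\scal_{g_0}=n^2H^2-|A|^2=n(n-1)H^2-|A^\circ|^2$, where $A^\circ$ is the traceless part of $A$. So that bound reads
\[
\lambda^2\ge\frac{n}{4(n-1)}\inf_M\frac{\scal_{g_0}}{H^2}=\frac{n^2}{4}-\frac{n}{4(n-1)}\sup_M\frac{|A^\circ|^2}{H^2},
\]
which is strictly below $n^2/4$ whenever $\Omega$ is not a sphere. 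No other admissible conformal factor helps either. For $n\ge3$, every bound of this type is at most $\frac{n}{4(n-1)}\mu_1(L)$. If that reached $n^2/4$, the upper bound would put you in the equality case of \eqref{E:Dirac eig con Lap}, forcing $(M,g)$ to be a hemisphere, which is precisely what fails for non-spherical $\Omega$.

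The Hijazi--Montiel bound $\lambda_1(\hatD_g)^2\ge n^2/4$ is an extrinsic (holographic) estimate. It comes from the Euclidean convex body bounded by $\Omega$ and is stated for the closed hypersurface. Noting that $\partial M$ has zero mean curvature only removes the boundary term in the \emph{intrinsic} Schr\"odinger--Lichnerowicz formula, and that leads back to the scalar-curvature bounds just shown to be too weak. A genuine boundary version of the extrinsic estimate would mean redoing the argument on half of the convex body. Its boundary then acquires a flat face in $\{x_{n+1}=0\}$ meeting $M$ along $\partial M$, and you give no argument for this.

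\textbf{The missing idea: reduce to the closed case by reflection,} using the operator you already found. Let $\rho$ be the reflection in $P$, put $\chi=\rmc_g(\nu_g)G=-\sqrt{-1}\gamma(e_{n+1})$, and set $T\varphi(p)=\chi\varphi(\rho(p))$. From $\chi\gamma(v)\chi^{-1}=-\gamma(\rho(v))$, $N\circ\rho=\rho(N)$, $A_{\rho(p)}\circ\rho=\rho\circ A_p$ and $H\circ\rho=H$, one gets $T^{-1}\hatD_gT=\hatD_g$. If $\varphi$ is an eigenspinor of $D_g$ under the chirality condition, then $\chi\varphi=\varphi$ on $\partial M$. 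Hence gluing $\varphi$ on $M$ with $T\varphi$ on $\rho(M)$ gives an $H^1$ weak eigenspinor of $\hatD_g$ on the closed $\Omega$, smooth by elliptic regularity, and the Hijazi--Montiel estimate applies. Your deduction of $\Rad^{\rm tr}_{\rmS^n_+}(M,g)\le1$ and $\lambda_1(D_g)^2=n^2/4$ then goes through.

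\textbf{Step 3.} A round hemisphere satisfying \eqref{E:non-hemisp eq} is forced to have radius $1$ only because of this lower bound. Your Gauss-equation route also needs an integration. The condition $\scal_g=n(n-1)$ gives $|A^\circ|^2+2(n-1)\Delta_{g_0}\log H+(n-1)(n-2)|d\log H|_{g_0}^2=0$. This forces $A^\circ=0$ only after integrating over $M$, using $\partial_\nu H=0$ on $\partial M$. Your alternative via Corollary \ref{C:hemisp rigidity} is fine. The paper instead uses $H^n\ge\kappa_1\cdots\kappa_n$ to get $\Vol(M,g)>\Vol(\rmS^n_+)$.
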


\begin{proof}
Consider the outward Gauss map from $\Omega$ to $\rmS^n$. By the symmetry and strict convexity of $\Omega$, this map restricts to a boundary-preserving diffeomorphism $f:M\to\rmS_+^n$. With respect to $g$, the singular values of $df$ are $\kappa_i/H$, with $\kappa_1,\dots,\kappa_n$ denoting the principal curvatures. Consequently,
\[
|df|_1=\frac{\kappa_1+\cdots+\kappa_n}{H}=n.
\]
Hence $\Rad_{\rmS_+^n}^{\rm tr}(M,g)\ge1$.

In view of \eqref{E:Dirac eig HHS-tr-2}, to prove \eqref{E:non-hemisp eq}, it remains to show that $\lambda_1(D_g)^2\ge n^2/4$. In our setting, we can identify the graded spinor bundle $\Sigma\Omega$ over $\Omega$ with the restriction of the ambient spinor bundle (compare Remark \ref{R:chi op}). Let $N$ be the inward unit normal vector field on $\Omega$. With respect to the induced metric $g_0$, the Clifford multiplication and spin connection are defined by
\begin{equation}\label{E:CM & conn from ambient}
\rmc_{g_0}(X)=\gamma(X)\gamma(N),\qquad \nabla^{g_0}_X=\bar{\nabla}_X-\frac{1}{2}\gamma(A(X))\gamma(N),
\end{equation}
where $\gamma(\cdot)$ (resp. $\bar{\nabla}$) denotes the ambient Clifford multiplication (resp. connection), and $A$ is the shape operator of $\Omega$ (whose trace is $nH$). With respect to the conformal metric $H^2g_0$ (which we still denote by $g$ on the whole $\Omega$), they are defined by
\[
\rmc_g(X)=H\gamma(X)\gamma(N),\qquad \nabla^g_X=\nabla^{g_0}_X-\frac{1}{2H}\gamma(X)\gamma(\grad_{g_0}H)-\frac{1}{2H}g_0(X,\grad_{g_0}H).
\]
Let $\hatD_{g_0}$ and $\hatD_g$ denote the corresponding spin Dirac operators on $\Omega$. They are related by
\begin{equation}\label{E:conf Dirac}
\hatD_g=H^{-\frac{n+1}{2}}\hatD_{g_0}H^{\frac{n-1}{2}}.
\end{equation}
See \cite[Section 4]{HijMon14}. For both metrics, one can check that a chirality operator is given by $G=\sqrt{-1}\gamma(N)$.

Let $e_{n+1}$ be the unit Euclidean vector in the direction of $x_{n+1}$. By the reflection symmetry of $\Omega$, the inward unit normal vector field along $\pM$ is $\nu_g=H^{-1}e_{n+1}$ under the metric $g$. Set
\[
\chi:=\rmc_g(\nu_g)G=-\sqrt{-1}\gamma(e_{n+1}).
\]
Let $\rho$ be the reflection map on $\RR^{n+1}$ about the plane $P$. Define a reflection map $T$ on $\Sigma\Omega$ by
\[
T(\varphi)(p):=\chi\varphi(\rho(p)).
\]
Note that for any ambient vector $v$, the Clifford relations give $\chi\gamma(v)\chi^{-1}=-\gamma(\rho(v))$.
By symmetry,
\[
N(\rho(p))=\rho(N)(p),\qquad A_{\rho(p)}(\rho(X))=\rho(A_p(X)).
\]
Put these together with \eqref{E:CM & conn from ambient}, we have $T^{-1}\hatD_{g_0}T=\hatD_{g_0}$.
Then by \eqref{E:conf Dirac} and the reflection symmetry of $H$, it holds that
\begin{equation}\label{E:ref inv Dirac}
T^{-1}\hatD_gT=\hatD_g.
\end{equation}
Now suppose $\varphi$ is a nonzero eigenspinor of $D_g$ under the chirality boundary condition associated with $G$. Extend it to a spinor field on the whole $\Omega$ by
\[
\hat{\varphi}(p):=
\begin{cases}
\varphi(p), & p\in M, \\
T(\varphi)(p), & p\in\rho(M).
\end{cases}
\]
The boundary condition ensures that $\chi\varphi=\varphi$ on $\pM$. So this is an $H^1$-extension. It follows from \eqref{E:ref inv Dirac} that $\hat{\varphi}$ is an eigenspinor of $\hatD_g$ in the weak sense. Elliptic regularity makes this weak eigenspinor smooth. In this situation, we can apply an estimate of Hijazi--Montiel \cite[Theorem 1, Proposition 9]{HijMon14} to conclude
\[
\lambda_1(\hatD_g)^2\ge\frac{n^2}{4},
\]
which implies the same lower bound for $\lambda_1(D_g)^2$. The first part of the proposition is proved.

To prove the second part, note that $H^n\ge\kappa_1\cdots\kappa_n$, with strict inequality at every point that is not umbilic. If $\Omega$ is not a Euclidean sphere, then the inequality is strict on an open subset of $M$. Hence
\[
\begin{aligned}
\Vol(M,g)=\int_MH^ndV_{g_0}>\int_M\kappa_1\cdots\kappa_ndV_{g_0}=\Vol(\rmS_+^n).
\end{aligned}
\]
This means that $(M,g)$ is not a round hemisphere, because a round hemisphere satisfying \eqref{E:non-hemisp eq} must be $\rmS_+^n$.
\end{proof}

\begin{remark}\label{R:non-hemisp eq}
In view of the rigidity part of Theorem \ref{T:Dirac eig HHS}, one has that $\Rad_{\rmS_+^n}^{\rm tr}(M,g)>\Rad_{\rmS_+^n}(M,g)$ for $(M,g)$ as in Proposition \ref{P:non-hemisp eq}.
\end{remark}

\begin{remark}\label{R:non-sp eq}
When the hypersurface $\Omega$ given in Proposition \ref{P:non-hemisp eq} (with no symmetry needed) is not a Euclidean sphere, $(\Omega,H^2g_0)$ also serves as a non-round equality example in the refined upper Dirac eigenvalue estimate of B\"ar \cite[Section 7.2]{Baer26hyp-rad}.
\end{remark}

%%%%%%%%%%%%%%%%%%%%%%
\subsection{Proof of Corollary \ref{C:hemisp rigidity}}\label{SS:pf cor}

Under the hypothesis of Corollary \ref{C:hemisp rigidity}, $\Rad_{\rmS_+^n}^{\rm tr}(M)\ge1$ by definition. Together with \eqref{E:eig lower} and \eqref{E:Dirac eig HHS-tr-2}, we have
\[
\frac{n^2}{4}\le\frac{n}{4(n-1)}\inf_M\scal\le\lambda_1(D)^2\le\frac{n^2}{4\Rad_{\rmS_+^n}^{\rm tr}(M)^2}\le\frac{n^2}{4}.
\]
Thus all inequalities are equalities. In particular, by Theorem \ref{T:Dirac eig lower}, $M$ is isometric to $\rmS_+^n$. It remains to show that $f$ is an isometry.

Like before, let $\mu_1,\dots,\mu_n$ be the singular values of $df$ at points where $f$ is differentiable. Then
\begin{equation}\label{E:AM-GM}
\mu_1\cdots\mu_n\le\bigg(\frac{\mu_1+\cdots+\mu_n}{n}\bigg)^n=\bigg(\frac{|df|_1}{n}\bigg)^n\le1.
\end{equation}
Without loss of generality, we may assume that $\deg(f)>0$. Using a similar argument as in \cite[Remark 4]{Baer26hyp-rad},
\begin{equation}\label{E:vol comparison}
\begin{aligned}
\Vol(M)& =\int_MdV_M\ge\int_M\mu_1\cdots\mu_ndV_M\ge\int_Mf^*dV_{\rmS_+^n} \\
& =\deg(f)\Vol(\rmS_+^n)\ge\Vol(\rmS_+^n)=\Vol(M).
\end{aligned}
\end{equation}
So all the inequalities in \eqref{E:AM-GM} and \eqref{E:vol comparison} are equalities. This means that $\mu_1=\cdots=\mu_n=1$, $df$ preserves the orientation almost everywhere, and $\deg(f)=1$. Again by \cite[Theorem 1]{KMS19arma}, $f$ is an isometry.

\begin{remark}\label{R:sp rigidity}
When $M$ is closed and $f:M\to\rmS^n$ is a Lipschitz map of nonzero degree such that $|df|_1\le n$ almost everywhere, Corollary \ref{C:hemisp rigidity} still holds. For the proof, the estimate of Hijazi--Montiel--Rold\'an is replaced by Friedrich's estimate. But the equality case in the latter estimate is not necessarily a sphere. Nevertheless, the equality characterization gives $\Ric=(n-1)g$ on $(M,g)$, which implies $\Vol(M)\le\Vol(\rmS^n)$ using the Bishop--Gromov volume comparison theorem. The rest of the argument is the same.
\end{remark}

%%%%%%%%%%%%%%%%%%%%%%
%%%%%%%%%%%%%%%%%%%%%%
\section{Refined estimates}\label{S:ref est}

In this section, we use the trace-refined hyper-hemispherical radius to provide concrete upper Dirac eigenvalue estimates for certain spaces.

%%%%%%%%%%%%%%%%%%%%%%
\subsection{Estimates on geodesic annuli}\label{SS:annuli est}

One sees from Example \ref{Eg:sph cap-tr} that \eqref{E:Dirac eig HHS-tr-2} serves as an improvement in the mean concave boundary scenario. In this subsection, we make a more general examination for geodesic annuli.

Let $X$ be a Riemannian $n$-manifold. A \emph{geodesic annulus} in $X$ is defined to be
\[
M=\overline{B(p,R)}\setminus B(p,r),
\]
where $B(p,R)$ (or $B(p,r)$) denotes the open geodesic ball centered at $p\in X$ with radius $R$ (or $r$). Here $R>r>0$ and $R$ is less than the injectivity radius of $X$ at $p$.

For simplicity, we will focus on the situation that $X$ has nonpositive sectional curvature. Let $\p_-M$ denote the inner sphere $S(p,r)$ and $\p_+M$ denote the outer sphere $S(p,R)$. Write points in $M$ in geodesic polar coordinates as
\[
x=\exp_p(tu),\qquad r\le t\le R,\ |u|=1.
\]
Choose a number $s\in(r,R)$ and divide the annulus $M$ into
\[
M_-:=\{x\in M:r\le t\le s\},\qquad M_+:=\{x\in M:s\le t\le R\}.
\]
We construct a map $f:M\to\rmS_+^n$ such that $M_-$ is collapsed angularly to a single meridian and $M_+$ is mapped radially to $\rmS_+^n$ in the polar way.

Let $q\in\rmS_+^n$ be the north pole and choose an orientation-preserving linear isometry
\[
A:T_pX\to T_q\rmS_+^n.
\]
Put
\[
a_-:=\frac{\pi}{2(s-r)},\qquad a_+:=\frac{\pi}{2(R-s)}.
\]
Define
\[
f(t,u):=
\begin{cases}
\exp_q(a_-(s-t)v_0)\quad & \mbox{on }M_-, \\
\exp_q(a_+(t-s)A(u))& \mbox{on }M_+,
\end{cases}
\]
where $v_0$ is a fixed unit vector in $T_q\rmS_+^n$. One can check that $f$ is a boundary-preserving Lipschitz map. Also, $\p_+M$ is mapped onto $\p\rmS_+^n$ with degree 1 while $\p_-M$ is mapped to a single point in $\p\rmS_+^n$ with degree 0. Thus $\deg(f)=1$.

To estimate the trace norm of $df$, set $f_\pm:=f|_{M_\pm}$. The radial Lipschitz constant of $f_\pm$ is $a_\pm$. For an angular Jacobi field $J$ along a radial geodesic $\gamma(t)=\exp_p(tu)$ in $M_+$ with $J(0)=0$, since $X$ has nonpositive curvature, Rauch comparison theorem gives $|J(t)|\ge t|J'(0)|$. On $\rmS_+^n$, at spherical radius $\theta(t)=a_+(t-s)$, the corresponding angular Jacobi field has norm $\sin\theta(t)|J'(0)|$. So each angular singular value of $df_+$ is bounded by
\[
Q(s):=\sup_{s\le t\le R}\frac{\sin(a_+(t-s))}{t}\le a_+.
\]
Put them together, we have
\[
|df|_1\le\max\{a_-,a_++(n-1)Q(s)\}\le\max\bigg\{\frac{\pi}{2(s-r)},\frac{n\pi}{2(R-s)}\bigg\}\quad\mbox{a.e.}
\]
As $s$ varies between $r$ and $R$, the minimum of the right-hand side is achieved when $\frac{1}{s-r}=\frac{n}{R-s}$. In this case
\[
|df|_1\le\frac{(n+1)\pi}{2(R-r)}\quad\mbox{a.e.}
\]
Therefore, we obtain
\[
\Rad_{\rmS_+^n}^{\rm tr}(M)\ge\frac{2n}{n+1}\frac{R-r}{\pi}.
\]
Using \eqref{E:Dirac eig HHS-tr-2}, we prove the following Dirac eigenvalue estimate.

\begin{proposition}\label{P:Dirac eig annuli}
Let $M=\overline{B(p,R)}\setminus B(p,r)$ be a geodesic annulus in a spin Riemannian $n$-manifold $X$ of nonpositive sectional curvature as above. Then under a chirality boundary condition,
\begin{equation}\label{E:Dirac eig annuli}
\lambda_1(D)^2\le\frac{(n+1)^2\pi^2}{16(R-r)^2}.
\end{equation}
\end{proposition}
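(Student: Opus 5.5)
The plan is to combine the refined estimate \eqref{E:Dirac eig HHS-tr-2} with the lower bound for $\Rad_{\rmS_+^n}^{\rm tr}(M)$ coming from the explicit map $f$ built just before the statement. The paper has already sketched this computation; what remains is to check that $f$ is admissible in the definition of $\Rad_{\rmS_+^n}^{\rm tr}(M)$ and that the pointwise trace-norm bound holds almost everywhere.

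First I check admissibility. The map $f$ is continuous across the interface $\{t=s\}$, since both pieces send it to the north pole $q$. Each piece is Lipschitz on a compact region where the geodesic polar coordinates are smooth, because $0<r\le t\le R<\mathrm{inj}(p)$. Hence $f$ is Lipschitz. For the boundary: on $\p_-M$ we have $t=r$, so $f=\exp_q(\frac{\pi}{2}v_0)\in\p\rmS_+^n$; on $\p_+M$ we have $t=R$, so $f=\exp_q(\frac{\pi}{2}A(u))\in\p\rmS_+^n$. Thus $f$ is boundary-preserving. By Remark \ref{R:bdry pres}, $\deg f=\deg(\p f)$. The restriction of $\p f$ to $\p_+M$ is the diffeomorphism $u\mapsto \exp_q(\frac{\pi}{2}A(u))$ onto the equator, of degree $\pm1$, while $\p_-M$ collapses to a point and contributes $0$. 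So $\deg f\neq0$.

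Next I bound $|df|_1$ at differentiability points, using the orthogonal splitting into radial and angular directions, which $df$ preserves on each piece. On $M_-$, $df$ has rank one with singular value $a_-$, so $|df|_1=a_-$. On $M_+$, the radial singular value is $a_+$. For the angular directions I use Rauch comparison under nonpositive curvature, $|J(t)|\ge t|J'(0)|$, together with the image Jacobi field of norm $\sin(a_+(t-s))|J'(0)|$. This shows the angular part of $df_+$ has operator norm at most $\sin(a_+(t-s))/t\le Q(s)$. The trace norm of this part is then at most $(n-1)Q(s)$, since each of its $n-1$ singular values is bounded by its operator norm.

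I expect the main subtle point to be the inequality $Q(s)\le a_+$. For $t\ge s$ we have $\sin(a_+(t-s))\le a_+(t-s)\le a_+t$, so the bound holds. This gives $|df|_1\le\max\{a_-,\,n a_+\}$ almost everywhere. Choosing $s$ with $s-r=\frac{R-r}{n+1}$ and $R-s=\frac{n(R-r)}{n+1}$ equalizes the two terms: $a_-=n a_+=\frac{(n+1)\pi}{2(R-r)}$. With $\rho:=\frac{2n(R-r)}{(n+1)\pi}$ this reads $|df|_1\le n/\rho$ almost everywhere, so $\Rad_{\rmS_+^n}^{\rm tr}(M)\ge\rho$.

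Finally, inserting this into \eqref{E:Dirac eig HHS-tr-2} gives
\[
\lambda_1(D)^2\le\frac{n^2}{4\rho^2}=\frac{n^2(n+1)^2\pi^2}{16n^2(R-r)^2}=\frac{(n+1)^2\pi^2}{16(R-r)^2}.
\]
This is \eqref{E:Dirac eig annuli}. The estimate \eqref{E:Dirac eig HHS-tr-2} applies because $M$ is a connected compact spin manifold with nonempty boundary: it is connected for $n\ge2$, and it inherits the spin structure from $X$.
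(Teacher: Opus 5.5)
Your proposal is correct and follows the paper's argument: you use the same two-piece map $f$, the Rauch-comparison bound on the angular singular values giving $|df|_1\le\max\{a_-,na_+\}$, the choice $s-r=\frac{R-r}{n+1}$, and then \eqref{E:Dirac eig HHS-tr-2}. Your extra checks (continuity at $t=s$, boundary preservation, $\deg f=\pm1$, the inequality $Q(s)\le a_+$, and bounding the angular singular values by the operator norm of the angular block) supply details the paper leaves implicit.
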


\begin{remark}\label{R:Dirac eig annuli}
If $X$ is a Cartan--Hadamard manifold, then the injectivity radius at every point is infinite. In this case \eqref{E:Dirac eig annuli} holds for any $p\in X$ and any outer radius $R$.
\end{remark}

%%%%%%%%%%%%%%%%%%%%%%
\subsection{Estimates on warped product spaces}\label{SS:warp-prod est}

In this subsection, we find a simple lower bound for the trace-refined hyper-hemispherical radius of a warped product space. Consider $M=[0,1]\times N$ with a warped product metric
\begin{equation}\label{E:warp-prod}
g=dt^2+\phi(t)^2h,
\end{equation}
where $h$ is a metric on a closed manifold $N$ and $\phi(t)>0$.

Set $R_N:=\Rad_{\rmS^{n-1}}^{\rm tr}(N)$ to be the trace-refined hyperspherical radius of $N$ in the sense of \cite[Section 7.2]{Baer26hyp-rad}. Then for any small $\varepsilon>0$, there exists a Lipschitz map $f:N\to\rmS^{n-1}$ of non-zero degree such that
\[
|df|_1\le\frac{n-1}{R_N-\varepsilon}\quad\mbox{a.e.}
\]
As in last subsection, choose a number $s\in(0,1)$ and define $F:M\to\rmS_+^n$ by
\[
F(t,x):=
\begin{cases}
(\frac{\pi}{2s}(s-t),\omega_0),\qquad & 0\le t\le s, \\
(\frac{\pi}{2(1-s)}(t-s),f(x)),\ & s\le t\le 1,
\end{cases}
\]
where $\omega_0\in\p\rmS_+^n$ is fixed. This is clearly a boundary-preserving Lipschitz map of non-zero degree.

Again as in last subsection, we have
\begin{equation}\label{E:trace norm warp-prod}
|dF|_1\le\max\bigg\{\frac{\pi}{2s},\frac{\pi}{2(1-s)}+\sup_{s\le t\le 1}\frac{n-1}{R_N-\varepsilon}\frac{\sin(\frac{\pi}{2(1-s)}(t-s))}{\phi(t)}\bigg\}\quad\mbox{a.e.}
\end{equation}
To get a more explicit (but fairly rough) estimate, put
\[
\phi_*:=\min_{0\le t\le 1}\phi(t)\quad\mbox{and}\quad A_\varepsilon:=\frac{n-1}{(R_N-\varepsilon)\phi_*}.
\]
Then
\[
|dF|_1\le\max\bigg\{\frac{\pi}{2s},\frac{\pi}{2(1-s)}+A_\varepsilon\bigg\}\quad\mbox{a.e.}
\]
When $\frac{\pi}{2s}=\frac{\pi}{2(1-s)}+A_\varepsilon$, the right-hand side is minimized and one has
\[
|dF|_1\le\frac{1}{2}(A_\varepsilon+\pi+\sqrt{A_\varepsilon^2+\pi^2})\quad\mbox{a.e.}
\]
Now letting $\varepsilon\searrow 0$ yields
\[
\Rad_{\rmS_+^n}^{\rm tr}(M)\ge\frac{2n}{A+\pi+\sqrt{A^2+\pi^2}},
\]
where $A:=\frac{n-1}{R_N\phi_*}$. This proves the following.

\begin{proposition}\label{P:Dirac eig warp-prod}
Let $N$ be a closed spin Riemannian $(n-1)$-manifold and $M=[0,1]\times N$ with the warped product metric \eqref{E:warp-prod} as above. Then under a chirality boundary condition,
\begin{equation}\label{E:Dirac eig warp-prod}
\lambda_1(D)^2\le\frac{(A+\pi+\sqrt{A^2+\pi^2})^2}{16}.
\end{equation}
\end{proposition}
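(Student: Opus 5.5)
The plan is to combine the refined estimate \eqref{E:Dirac eig HHS-tr-2} with the lower bound $\Rad_{\rmS_+^n}^{\rm tr}(M)\ge\frac{2n}{A+\pi+\sqrt{A^2+\pi^2}}$ derived just above. Substituting, we get
\[
\lambda_1(D)^2\le\frac{n^2}{4}\cdot\frac{(A+\pi+\sqrt{A^2+\pi^2})^2}{4n^2}=\frac{(A+\pi+\sqrt{A^2+\pi^2})^2}{16},
\]
which is \eqref{E:Dirac eig warp-prod}. So the substance lies in confirming the ingredients of that lower bound.

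First, we check that $F$ is admissible for the definition of $\Rad_{\rmS_+^n}^{\rm tr}$.
\begin{itemize}
\item \emph{Lipschitz and well defined.} Both formulas give the north pole at $t=s$, so they agree there. Each piece is Lipschitz, since $f$ is Lipschitz on $N$ and the polar coordinate map is Lipschitz away from the pole; near $t=s$ the factor $\sin$ of the polar angle controls the angular part.
\item \emph{Boundary-preserving.} At $t=0$ the image is $(\pi/2,\omega_0)\in\p\rmS_+^n$, and at $t=1$ it is $(\pi/2,f(x))\in\p\rmS_+^n$.
\item \emph{Degree.} The map $\p F$ on $\{1\}\times N$ is $f$, of nonzero degree, while on $\{0\}\times N$ it is constant, of degree $0$. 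Using Remark \ref{R:bdry pres}, and fixing orientations consistently with the inward normal, we get $\deg F=\pm\deg f\ne0$.
\end{itemize}
If one prefers smooth $f$, a smooth approximation of $f$ changes nothing, since only the a.e.\ trace-norm bound and the Lipschitz property are used.

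Second, we verify the trace-norm bound \eqref{E:trace norm warp-prod}.
\begin{itemize}
\item \emph{On $0\le t\le s$.} The map factors through $t$, so $dF$ has rank one with singular value $\pi/(2s)$.
\item \emph{On $s\le t\le1$.} Write $\theta(t)=\frac{\pi}{2(1-s)}(t-s)$. The metric $g$ splits orthogonally into $dt^2$ and $\phi^2h$, and $\rmS_+^n$ carries the metric $d\theta^2+\sin^2\theta\,g_{\rmS^{n-1}}$. Hence $dF$ is block diagonal: the radial singular value is $\frac{\pi}{2(1-s)}$, and the angular block is $\frac{\sin\theta(t)}{\phi(t)}\,df$ measured with respect to $h$ and $g_{\rmS^{n-1}}$.
\item \emph{Combining.} The trace norm is additive over orthogonal blocks, so a.e.
\[
|dF|_1=\frac{\pi}{2(1-s)}+\frac{\sin\theta}{\phi}|df|_1\le\frac{\pi}{2(1-s)}+\frac{A_\varepsilon\,\phi_*\sin\theta}{\phi}\le\frac{\pi}{2(1-s)}+A_\varepsilon .
\]
\end{itemize}

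Third, we optimize over $s$. The first term $\pi/(2s)$ decreases in $s$, and the second increases, so the maximum is minimized where they are equal. Setting $x=\frac{\pi}{2s}$, the equation $\frac{\pi}{2s}=\frac{\pi}{2(1-s)}+A_\varepsilon$ becomes $x=\frac{\pi x}{2x-\pi}+A_\varepsilon$. This is the quadratic $2x^2-2(\pi+A_\varepsilon)x+\pi A_\varepsilon=0$, whose larger root is $x=\frac12(A_\varepsilon+\pi+\sqrt{A_\varepsilon^2+\pi^2})$. This root exceeds $\pi/2$, so $s\in(0,1)$. Thus $\Rad^{\rm tr}\ge 2n/(A_\varepsilon+\pi+\sqrt{A_\varepsilon^2+\pi^2})$, and letting $\varepsilon\to0$ gives the lower bound used above.

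The only delicate points are the degree bookkeeping (orientation signs) and the Lipschitz continuity of $F$ at the collapse level $t=s$. Neither affects the final inequality, since only $\deg F\ne0$ is needed.
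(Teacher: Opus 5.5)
Your proposal is correct and follows the paper's argument: the same map $F$, the same trace-norm bound on each piece, the same choice of $s$ balancing $\pi/(2s)$ against $\pi/(2(1-s))+A_\varepsilon$, then $\varepsilon\to0$ and \eqref{E:Dirac eig HHS-tr-2}. You also fill in details the paper leaves implicit, namely the additivity of the trace norm over the orthogonal radial and angular blocks, the explicit quadratic together with the check that $s\in(0,1)$, and the degree count via Remark \ref{R:bdry pres}.
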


\begin{remark}\label{R:Dirac eig warp-prod}
In the case that the warped-product space is collapsed at one end (e.g. the warped metric extends to a smooth metric after the end $\{0\}\times N$ is collapsed to a point), one can choose $s=0$ and \eqref{E:trace norm warp-prod} is reduced to
\[
|dF|_1\le\frac{\pi}{2}+\frac{n-1}{R_N-\varepsilon}\cdot\sup_{0<t\le 1}\frac{\sin\frac{\pi t}{2}}{\phi(t)}.
\]
The trace-refined hyper-hemispherical radius is then estimated by letting $\varepsilon$ approach 0. This can be seen as a generalization of Example \ref{Eg:sph cap-tr}.
\end{remark}

\begin{remark}\label{R:Dirac eig annuli+warp-prod}
Combining \eqref{E:Dirac eig annuli} or \eqref{E:Dirac eig warp-prod} with \eqref{E:Dirac eig con Lap} will also lead to an upper estimate for the first eigenvalue of the conformal Laplacian of a geodesic annulus or a warped-product space in dimensions $n\ge3$.
\end{remark}

%In particular, note that $\Rad_{\rmS^{n-1}}^{\rm tr}(\rmS^{n-1})=1$. (In fact, $|d\id|_1=n-1$ and the volume bound gives the opposite inequality.) So
%\[
%\Rad_{\rmS_+^n}^{\rm tr}([0,1]\times\rmS^{n-1})\ge
%\]

%\begin{theorem}\label{T:hemisp}
%Let $M$
%\end{theorem}

%%%%%%%%%%%%%%%%%%%%%
%%%%%%%%%%%%%%%%%%%%%
\section*{Acknowledgements and Disclosure on AI usage}\label{S:ackn}

%%%%%%%%%%%%%%%%%%%%%%
%\subsection*{Acknowledgements}\label{SS:ackn}

The author thanks Professor Ruijun Wu for discussions.

%%%%%%%%%%%%%%%%%%%%%%
%\subsection*{Disclosure on AI usage}\label{SS:AI usage}

The principal ideas of the paper are proposed by the author. During the preparation of the manuscript, generative AI tools (ChatGPT, Deepseek) have been used in assisting with computations (in Section 6) and improving the exposition. In particular, Proposition \ref{P:non-hemisp eq} and the formulation of Corollary \ref{C:hemisp rigidity} in terms of the trace norm of $df$ (instead of the Lipschitz constant of $f$) are suggested by AI. The manuscript is written by the author, who takes full responsibility for the content and mathematical arguments of the paper.

%%%%%%%%%%%------------
%%%%%%%%%%%%%%%%%%%%%%
%%%%%%%%%%%%%%%%%%%%%%
%---------------------------------------------------
\bibliographystyle{amsplain}
%\bibliography{bibdesk}

% \bib, bibdiv, biblist are defined by the amsrefs package.

\end{document}